\newtheorem{thm}{Theorem}[section]
\newtheorem{lem}[thm]{Lemma}
\newtheorem{prop}[thm]{Proposition}
\newtheorem{cor}[thm]{Corollary}
\theoremstyle{definition}
\newtheorem{exmp}{Example}[section]
\theoremstyle{remark}
\newtheorem{rem}{Remark}
\theoremstyle{remark}
\theoremstyle{definition}
\title{Finite, fiber- and orientation-preserving group actions on totally orientable Seifert manifolds}
\author{
  Benjamin Peet\\
  Department of Mathematics\\
  St. Martin's University\\
  Lacey, WA 98503 \\
  \texttt{bpeet@stmartin.edu} \\
}
\begin{document}
\maketitle

\begin{abstract}
In this paper we consider the finite groups that act fiber- and orientation-preservingly on closed, compact, and orientable Seifert manifolds that fiber over an orientable base space. We establish a method of constructing such group actions and then show that if an action satisfies a condition on the obstruction class of the Seifert manifold, it can be derived from the given construction. The obstruction condition is refined and the general structure of the finite groups that act via the construction is provided.

\end{abstract}

\keywords{geometry; topology; $3$-manifolds; finite group actions; Seifert fibrations}

\section{Introduction}

\subsection{Discussion of Results}
The main question asked in this paper is: “What are the possible finite, fiber- and orientation-preserving group actions on a closed, compact, and orientable Seifert manifold with orientable base space?” We consider this by first providing a construction of an orientation-preserving group action on a given Seifert manifold. This construction is founded upon the way a Seifert manifold is put together as Dehn fillings of $S^{1}\times F$. Here $F$ is a surface with boundary. The construction is - in a general sense - to take a product action on $S^{1}\times F$ and extend across the Dehn fillings. We will refer to actions that can be constructed in this way as \textit{extended product actions}.

Any fiber-preserving group action can only exchange critical fibers if they are of the same type, so drilling and refilling these trivially will leave an action on a trivially fibered Seifert manifold. This may or may not be a product however. It is the obstruction class that determines this. 

Our main result then states:
\newtheorem*{thm:associativity}{Theorem \ref{thm:associativity}}
\begin{thm:associativity}
 Let $M$ be a closed, compact, and orientable Seifert $3$-manifold that fibers over an orientable base space. Let $\varphi:G\rightarrow Diff_{+}^{fp}(M)$ be a finite group action on $M$ such that the obstruction class can expressed as $$b=\sum_{i=1}^{m}(b_{i}\cdot\#Orb_{\varphi}(\alpha_{i}))$$ for a collection of fibers $\{\alpha_{1},\ldots,\alpha_{m}\}$ and integers $\{b_{1},\ldots,b_{m}\}$. Then $\varphi$ is an extended product action.
\end{thm:associativity}

In order to establish this result we analyze, refine, and rework Theorem 2.3 of Peter Scott and William Meeks in their paper \textit{Finite group actions on 3-manifolds} \cite{Meeks1986}. This result establishes that if a finite action on $S^{1}\times F$ respects the product structure on the boundary, then there is a product structure that agrees with the original product structure on the boundary and remains invariant under the action. This result allows us to consider when finite actions can be constructed via the given method, that is, are extended product actions.

The main result then shows that given a finite, orientation and fiber-preserving action, the action can be constructed via the given method - provided it satisfies a condition on the obstruction class of the Seifert manifold. This is within Theorem 5.3 but is specifically given by the following:

If $\varphi:G\rightarrow Diff_{+}^{fp}(M)$ is a finite group action, we will call satisfaction of $$b=\sum_{i=1}^{s}(b_{i}\cdot\#Orb_{\varphi}(\alpha_{i}))$$ for some fibers $\{\alpha_{1},\ldots,\alpha_{s}\}$ and integers $\{b_{1},\ldots,b_{s}\}$, \textit{satisfying the obstruction condition}. 

This obstruction condition will be refined and the general structure of such a group provided.

\subsection{Preliminary Definitions}
We first give some preliminary definitions.
Throughout this paper we will use $M$ to denote a closed, compact, connected, orientable (and oriented) smooth manifold of dimension $3$. $\hat{M}$ will denote a compact, orientable (and oriented) smooth manifold of dimension $3$ with boundary. $G$ will be a finite group. We let $Diff(M)$ be the group of self-diffeomorphisms of $M$, and then define a $G$-action on $M$ to be an injection $\varphi:G\rightarrow Diff(M)$. We use the notation $Diff_{+}(M)$ for the group of orientation-preserving self-diffeomorphisms of $M$. 

$M$ will further be assumed to be a \textit{Seifert-fibered} manifold. We use the original Seifert definition. That is, a Seifert manifold is a $3$-manifold such that $M$ can be decomposed into disjoint fibers where each fiber is a simple closed curve. Then for each fiber $\gamma$, there exists a fibered neighborhood (that is, a subset consisting of fibers and containing $\gamma$) which can be mapped under a fiber-preserving map onto a solid fibered torus. A fiber is known as \textit{regular} if the solid fibered torus is trivially fibered and \textit{critical} if it is not. For further details see the original work of Herbert Seifert in his dissertation \textit{Topologie Dreidimensionaler Gefaserter R{\"a}ume} \cite{Seifert1933}.

It should be noted here that due to the compactness of $M$, the number of critical fibers necessarily must be finite. For a proof of this see John Hempel's \textit{3-Manifolds} \cite{hempel}.

A \textit{Seifert bundle} is a Seifert manifold $M$ (or $\hat{M}$) along with a continuous map $p:M\rightarrow B$ where $p$ identifies each fiber to a point. Note that $B$ is an orbifold without mirror lines, but with cone points refering to the critical fibers. For clarity, we denote the underlying space of $B$ as $B_{U}$. In our case this will be a compact, orientable (and oriented) surface without boundary for $M$ and with boundary for $\hat{M}$.

Following William Thurston's \textit{The geometry and topology of 3-manifolds} \cite{thurstongeometry}, we use the notation $(n_{1},\ldots,n_{k};m_{1},\ldots,m_{l})$ as a data set for a $2$-orbifold $B$ with $k$ cone points of orders $n_{1},\ldots,n_{k}$, and $l$ corner reflectors of orders $m_{1},\ldots,m_{l}$. 

A $G$-action $\varphi$ is said to be \textit{fiber-preserving} on a Seifert manifold $M$ if for any fiber $\gamma$ and any $g\in G$, $\varphi(g)(\gamma)$ is some fiber of $M$. We use the notation $Diff^{fp}(M)$ for the group of fiber-preserving self-diffeomorphisms of $M$ (given some Seifert fibration). Given a fiber-preserving $G$-action, there is an induced action $\varphi_{B_{U}}:G_{B_{U}}\rightarrow Diff(B_{U})$ on the underlying space $B_{U}$ of the base space $B$. 

For distinction, we use the notation $Diff^{I-fp}(N)$ to refer to $I$-fiber-preserving diffeomorphisms of a manifold $N$. An $I$-fibration or a \textit{fibration by arcs} is a decomposition of the manifold $N$ into disjoint fibers each of which is diffeomorphic to the unit interval $I$.

For a finite action $\varphi:G\rightarrow Diff^{fp}(M)$, we define the \textit{orbit number of a fiber} $\gamma$ under the action to be $\#Orb_{\varphi}(\gamma)=\#\{\alpha|\varphi(g)(\gamma)=\alpha\textrm{ for some }g\in G\}$. 

If we have a manifold $\hat{M}$, then a \textit{product structure} on $\hat{M}$ is a diffeomorphism $k:A\times B\rightarrow \hat{M}$ for some manifolds $A$ and $B$. For further details see John M. Lee's \textit{Introduction to Smooth Manifolds} \cite{lee2003smooth}. If a Seifert-fibered manifold $\hat{M}$ has a product structure $k:S^{1}\times F\rightarrow \hat{M}$ for some surface with boundary $F$ and $k(S^{1}\times\{x\})$ are the fibers of $\hat{M}$ for each $x\in F$, then we say that $k:S^{1}\times F\rightarrow \hat{M}$ is a \textit{fibering product structure} of $\hat{M}$. 

We note here that a fibering product structure on $\hat{M}$ is equivalent to the existence of a foliation of $\hat{M}$ by both circles and by surfaces diffeomorphic to $F$ so that any circle intersects each foliated surface exactly once.

Given that the first homology group (equivalently the first fundamental group) of a torus is $\mathbb{Z}\times\mathbb{Z}$ generated by two elements represented by any two nontrivial loops that cross at a single point, we can use the \textit{meridian-longitude framing} from a product structure as representatives of two generators. If we have a diffeomorphism $f:T_{1}\rightarrow T_{2}$ and product structures $k_{i}:S^{1}\times S^{1}\rightarrow T_{i}$, then we can express the induced map between the first homology groups $H_{1}(T_{1})$ and $H_{1}(T_{2})$ by a matrix that uses bases for $H_{1}(T_{i})$ derived from the meridian-longitude framings that arise from $k_{i}:S^{1}\times S^{1}\rightarrow T_{i}$. We denote this matrix as $\left[\begin{array}{cc}
a_{11} & a_{12}\\
a_{21} & a_{22}
\end{array}\right]_{k_{2}}^{k_{1}}:H_{1}(T_{1}) \rightarrow H_{1}(T_{2})$.

We say that a $G$-action $\varphi:G\rightarrow Diff(A\times B)$ is a \textit{product action} if for each $g\in G$, the diffeomorphism $\varphi(g):A\times B\rightarrow A\times B$ can be expressed as $(\varphi_{1}(g),\varphi_{2}(g))$ where $\varphi_{1}(g):A\rightarrow A$ and $\varphi_{2}(g):B\rightarrow B$. Here $\varphi_{1}:G\rightarrow Diff(A)$ and $\varphi_{2}:G\rightarrow Diff(B)$ are not necessarily injections. 

Given an action $\varphi:G\rightarrow Diff(M)$ and a product structure $k:A\times B\rightarrow M$, we say that $\varphi$ \textit{leaves the product structure $k:A\times B\rightarrow M$ invariant} if $\psi(g)=k^{-1}\circ\varphi(g)\circ k$ defines a product action $\psi:G\rightarrow Diff(A\times B)$.

If we have a manifold $\hat{M}$ with torus boundary components and each of those boundary tori $T_{i}$ have a product structure $k_{i}:S^{1}\times S^{1}\rightarrow T_{i}$, then we say a $G$-action $\varphi:G\rightarrow Diff(\hat{M})$ \textit{respects the product structures} on the boundary tori if $k_{j}^{-1}\circ\varphi(g)\circ k_{i}:S^{1}\times S^{1}\rightarrow S^{1}\times S^{1}$ can be expressed as $(\varphi_{1}(g),\varphi_{2}(g))$ where $\varphi_{1}:G\rightarrow Diff(S^{1})$ and $\varphi_{2}:G\rightarrow Diff(S^{1})$. These again are not necessarily injections.

Suppose that we now have a fibering product structure $k:S^{1}\times F\rightarrow M$. We then say that each boundary torus is \textit{positively oriented} if the fibers are given an arbitrary orientation and then each boundary component of $k(\{u\}\times F)$ is oriented by taking the normal vector to the surface according the orientation of the fibers.

We will throughout treat $S^{1}$ as the unit circle within $\mathbb{C}$ and by extension the unit disc will be $D=\{ru|0\leq r \leq 1, u \in \mathbb{C}, ||u||=1\}$; the torus will be $T=S^{1} \times S^{1}$; and the solid torus will be $V=S^{1}\times D$.

\section{Dehn Fillings and Seifert Manifolds}

We first establish some background work on Dehn fillings and Seifert manifolds by showing how a manifold $M$ can be constructed by filling the boundary tori of some product manifold $\hat{M}=S^{1} \times F$ with solid fibered tori.

This section broadly follows the construction from the work of Mark Jankins and Walter Neumann in \textit{Lectures on Seifert manifolds} \cite{jankins1983lectures}. We will use the following notation for a compact, closed, and orientable Seifert manifold $M$ with orientable base space:$$(g,o_{1}|(q_{1},p_{1}),\ldots,(q_{n},p_{n})), q_{i}>0$$

This notation implies that $M$ is a manifold that can be decomposed into a manifold $\hat{M}\cong S^{1}\times F$ that is trivially fibered with boundary $\partial\hat{M}=T_{1}\cup\ldots\cup T_{n}$, and $X=V_{1}\cup\ldots\cup V_{n}$, a disjoint collection of fibered solid tori (the notation specifies the fibration). Here $F$ is a compact, connected, orientable genus $g$ surface with $n$ boundary components. $M$ is reobtained by a gluing map $d:\partial X\rightarrow\partial\hat{M}$. This is defined as follows:

Take a given fibering product structure $k_{\hat{M}}:S^{1}\times F\rightarrow\hat{M}$ on $\hat{M}$, and some particular product structure $k_{X}:S^{1}\times(D_{1}\cup\ldots\cup D_{n})\rightarrow X$ where each $D_i$ is a disk. Then define product structures $k_{\partial V_{i}}:S^{1}\times S^{1}\rightarrow\partial V_{i}$ and $k_{T_{i}}:S^{1}\times S^{1}\rightarrow T_{i}$ by parameterizing each component of $\partial F$ and $\partial D_{i}$ with a positive orientation by some diffeomorphisms $\rho_{i}:S^{1}\rightarrow(\partial F)_{i}$ and $\sigma_{i}:S^{1}\rightarrow\partial D_{i}$, and then taking $k_{\partial V_{i}}(u,v)=k_{X}(u,\sigma_{i}(v))$ and $k_{T_{i}}(u,v)=k_{\hat{M}}(u,\sigma_{i}(v))$.

$d:\partial X\rightarrow\partial\hat{M}$ is then a diffeomorphism such that $d(\partial V_{i})=T_{i}$ and $$(k_{T_{i}}^{-1}\circ d|_{\partial Vi}\circ k_{\partial V_{i}})(u,v)=(u^{x_{i}}v^{p_{i}},u^{y_{i}}v^{q_{i}})$$ where $x_{i}q_{i}-y_{i}p_{i}=-1$ and $|y_{i}|<q_{i}$. 

This condition requires that $(q_{i},p_{i})$ are coprime. 

We note therefore that the induced fibration on each solid torus $V_{i}$, is a $(-q_{i},y_{i})$ fibration (according to $k_{\partial V_{i}}$). Hence $(q_{i},p_{i})$ refers to a regular fiber if $q_{i}=\pm 1$ and a critical fiber otherwise. Also note that again by compactness there can only be a finite number of critical fibers.

We now quote Theorem 1.1. from Walter Neumann and Frank Raymond's paper \textit{Seifert manifolds, plumbing, $\mu$-invariant and orientation reversing maps} \cite{neumann1978seifert} regarding Seifert invariants:

\begin{thm}
Let $M$ and $M'$ be two orientable Seifert manifolds with associated Seifert invariants $(g,o_{1}|(\alpha_{1},\beta_{1}),\ldots,(\alpha_{s},\beta_{s}))$ and $(g,o_{1}|(\alpha_{1}',\beta_{1}'),\ldots,(\alpha_{t}',\beta_{t}'))$ respectively. Then $M$ and $M'$ are orientation-preservingly diffeomorphic by a fiber-preserving diffeomorphism if and only if, after reindexing the Seifert pairs if necessary, there exists an n such that:
\begin{enumerate}
 \item $\alpha_{i}=\alpha_{i}'$ for $i=1,\ldots,n$ and $\alpha_{i}=\alpha_{j}'=1$ for $i,j>n$ 
\item $\beta_{i}\equiv\beta_{i}'\textrm{ (mod }\alpha_{i})$ for $i=1,\ldots,n$
\item $\sum\limits_{i=1}^{s}\frac{\beta_{i}}{\alpha_{i}}=\sum\limits_{i=1}^{t}\frac{\beta_{i}'}{\alpha_{i}'}$
\end{enumerate}
\end{thm}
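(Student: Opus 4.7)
My plan is to establish the biconditional by treating each direction separately, leveraging the explicit Dehn filling construction from Section 2.

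For sufficiency, I would first append pairs of the form $(1,0)$ to whichever list is shorter so that both invariants have the same length; attaching a fibered solid torus along a $(1,0)$-gluing does not alter the manifold up to fiber-preserving diffeomorphism, because that filling coincides with the product filling. It then suffices to treat the case $\alpha_i=\alpha_i'$ for each $i$. I would introduce the following atomic move on Seifert data: replace $(\alpha_i,\beta_i)$ by $(\alpha_i,\beta_i+k\alpha_i)$ while simultaneously replacing some regular pair $(1,\beta_j)$ by $(1,\beta_j-k)$. Geometrically, this corresponds to rechoosing the fibering product structure on $\hat M$ by composing with a Dehn twist along a vertical annulus joining the $i$th and $j$th boundary tori, so it preserves $M$ as a fibered manifold. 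Condition (2) lets me align each $\beta_i$ with $\beta_i'$ modulo $\alpha_i$, and condition (3) guarantees that the cumulative compensating shifts can be absorbed by a regular pair without contradiction.

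For necessity, I would start from a fiber-preserving orientation-preserving diffeomorphism $\Phi:M\to M'$. Its descent to a homeomorphism between underlying base surfaces must permute cone points while respecting their orders, which gives condition (1). For condition (2), I would observe that once a fibering product structure on $\hat M$ is fixed, the integer $\beta_i$ is only determined once a meridian is chosen on the $i$th boundary torus; different admissible choices of $(x_i,y_i)$ subject to $x_iq_i-y_ip_i=-1$ shift $\beta_i$ by multiples of $\alpha_i$, so only the residue class is intrinsic. For condition (3), I would identify $\sum\beta_i/\alpha_i$ with the rational Euler number of the orbifold $S^1$-bundle $M\to B$ and appeal to its topological invariance.

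The principal obstacle is exactly condition (3). Showing $\sum\beta_i/\alpha_i$ is a genuine invariant requires verifying its independence from every choice made in the construction: the product structure $k_{\hat M}$, the parameterizations $\rho_i$ and $\sigma_i$, and the pair $(x_i,y_i)$ solving $x_iq_i-y_ip_i=-1$ with $|y_i|<q_i$. I would carry this out either by recomputing the sum after each elementary change of data and checking invariance directly, or by recognizing it as the Euler number of the associated orbifold circle bundle, for which invariance is a standard consequence of the bundle classification. Once (3) is in hand as an invariant, the rest of the proof reduces to bookkeeping with $SL_2(\mathbb Z)$ gluing matrices and the atomic move introduced above.
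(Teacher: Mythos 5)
The paper does not prove this statement at all: it is quoted verbatim as Theorem 1.1 of Neumann and Raymond \cite{neumann1978seifert}, and the author only uses it as a black box to derive Corollary 2.2 (the ``moves'' on Seifert invariants). So there is no in-paper proof to compare against; what you have written is an outline of the standard argument from the literature, and as an outline it is essentially sound. Your sufficiency direction (pad with $(1,0)$ pairs, then realize the shift $(\alpha_i,\beta_i)\mapsto(\alpha_i,\beta_i+k\alpha_i)$ with a compensating shift elsewhere via a Dehn twist along a vertical annulus, with condition (3) forcing the total shift to vanish) is exactly the mechanism behind the paper's move (3), and your necessity direction correctly isolates the three invariants. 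Two caveats. First, your attribution of the mod-$\alpha_i$ ambiguity in $\beta_i$ to the choice of $(x_i,y_i)$ solving $x_iq_i-y_ip_i=-1$ is slightly off: in the paper's gluing convention, changing $(x_i,y_i)$ amounts to rechoosing the longitude of the solid torus $V_i$ and does not move $\beta_i$ at all; the residue class $\beta_i \bmod \alpha_i$ is what survives rechoosing the section (the restriction of $k_{\hat M}$ to $T_i$), which you do list among the choices but should be the one you single out. Second, you correctly identify the invariance of $\sum_i\beta_i/\alpha_i$ as the crux, but both of your proposed routes (recompute after each elementary change of data, or invoke the classification of orbifold circle bundles) are stated as plans rather than carried out, and the second risks circularity unless you establish the Euler number's invariance independently of the very classification you are proving. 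Completing that step is where the real content of Neumann--Raymond's proof lives.
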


The consequence of this theorem is that we can perform the following "moves" on the Seifert invariants:
\begin{enumerate}
    \item Permute the indices
\item Add or delete a Seifert pair (1,0)
\item Replace $(\alpha_{1},\beta_{1}), (\alpha_{2},\beta_{2})$ by $(\alpha_{1},\beta_{1}+m\alpha_{1}), (\alpha_{2},\beta_{2}-m\alpha_{2})$ for some integer $m$.
\end{enumerate}

From this we yield the Corollary:

\begin{cor} Let $M$ and $M'$ be two orientable Seifert manifolds with associated Seifert invariants $(g,o_{1}|(\alpha_{1},\beta_{1}),\ldots,(\alpha_{s},\beta_{s}))$ and $(g,o_{1}|(\alpha_{1},\beta_{1}+m_{1}\alpha_{1}),\ldots,(\alpha_{s},\beta_{s}+m_{s}\alpha_{s}))$ respectively. Then $M$ and $M'$ are orientation-preservingly diffeomorphic by a fiber-preserving diffeomorphism if and only if $$\sum_{i=1}^{s}m_{i}=0$$

\end{cor}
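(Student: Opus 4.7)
The plan is to apply the preceding theorem of Neumann--Raymond directly, noting that the two Seifert invariant lists at hand are already matched up index-by-index with identical $\alpha$-coordinates, so no reindexing is required.

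First I would verify the three numbered conditions of the theorem in turn for the pairs $(\alpha_i,\beta_i)$ and $(\alpha_i,\beta_i+m_i\alpha_i)$. Condition (1) is immediate: since $\alpha_i=\alpha_i'$ for every $i$, we may take $n=s$ (and if some $\alpha_i=1$ we may equivalently absorb those indices into the tail beyond $n$ without changing anything downstream). Condition (2) is the trivial congruence $\beta_i+m_i\alpha_i\equiv \beta_i\pmod{\alpha_i}$, which holds for every $i$ regardless of $m_i$. Thus the theorem's statement collapses to condition (3) alone.

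Second, I would compute condition (3) explicitly:
\[
\sum_{i=1}^{s}\frac{\beta_i+m_i\alpha_i}{\alpha_i}=\sum_{i=1}^{s}\frac{\beta_i}{\alpha_i}+\sum_{i=1}^{s}m_i.
\]
Hence the equality $\sum \beta_i/\alpha_i=\sum(\beta_i+m_i\alpha_i)/\alpha_i$ demanded by the theorem is equivalent to $\sum_{i=1}^{s}m_i=0$. Combining the two implications gives the corollary in both directions.

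There is no real obstacle here; the statement is essentially a bookkeeping consequence of the theorem, and the only minor point to be careful about is the case $\alpha_i=1$, where condition (2) is vacuous and one must check that the corresponding term $\beta_i/1=\beta_i$ in the sum of condition (3) still contributes $m_i$ to the difference, which it does.
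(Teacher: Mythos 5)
Your proposal is correct and follows essentially the same route as the paper: both observe that conditions (1) and (2) of the Neumann--Raymond theorem hold trivially for these matched invariant lists, reducing everything to condition (3), which after the same explicit computation is equivalent to $\sum_{i=1}^{s}m_{i}=0$. Your treatment is slightly more careful about the $\alpha_i=1$ case, but the argument is the same.
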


\begin{proof} By Theorem 2.1, we need only consider the third condition. The first two conditions hold trivially. So, the two manifolds are diffeomorphic if and only if: $$\sum\limits_{i=1}^{s}\frac{\beta_{i}}{\alpha_{i}}=\sum\limits_{i=1}^{s}\frac{\beta_{i}+m_{i}\alpha_{i}}{\alpha_{i}}=\sum\limits_{i=1}^{s}\frac{\beta_{i}}{\alpha_{i}}+\sum\limits_{i=1}^{s}m_{i}$$

Hence, if and only if $$\sum_{i=1}^{s}m_{i}=0$$

\end{proof}

We can now define normalized Seifert invariants so that any orientable Seifert manifold over an orientable base space can be expressed as: $$(g,o_{1}|(q_{1},p_{1}),\ldots,(q_{n},p_{n}),(1,b))$$

Where $0<p_{i}<q_{i}$ and $b$ is some integer called the \textit{obstruction class}. 

The constant: $$e=-(b+\sum_{i=1}^{n}\frac{p_{i}}{q_{i}})$$ is known as the \textit{Euler class of the Seifert bundle} and is zero if and only if the Seifert bundle is covered by the trivial bundle. Alternatively, it is zero if the manifold $M$ has the geometry of either $S^{2}\times\mathbb{R},H^{2}\times\mathbb{R}$, or $E^{3}$. For more details, refer to Peter Scott's paper \textit{The geometries of 3-manifolds} \cite{scott1983geometries}.

\section{Construction of a Finite, Fiber- and Orientation-Preserving Action}

We now present a construction for a finite, orientation and fiber-preserving action on a Seifert manifold $M=(g,o_{1}|(q_{1},p_{1}),\ldots,(q_{n},p_{n}))$. Here the Seifert invariants are not necessarily normalized.

According to Section 2, we can decompose $M$ into $\hat{M}$ and $X$ where $\hat{M}\cong S^{1}\times F$ is trivially fibered and $X$ is a disjoint union of $n$ solid tori. We then have a gluing map $d:\partial X\rightarrow\partial\hat{M}$, so that for a fibering product structure $k_{\hat{M}}:S^{1}\times F\rightarrow\hat{M}$, there is some $k_{X}:S^{1}\times(D_{1}\cup\ldots\cup D_{n})\rightarrow X$ and restricted positively oriented product structures $k_{\partial V_{i}}:S^{1}\times S^{1}\rightarrow\partial V_{i}$ and $k_{T_{i}}:S^{1}\times S^{1}\rightarrow T_{i}$ such that $(k_{T_{i}}^{-1}\circ d|_{\partial V_{i}}\circ k_{\partial V_{i}})(u,v)=(u^{x_{i}}v^{p_{i}},u^{y_{i}}v^{q_{i}})$.

\subsection{Constructing a Finite, Fiber-Preserving Action on $\hat{M}$}

We pick a finite, fiber-preserving group action on $\hat{M}$ by first choosing some (not-necessarily effective) group action $\varphi_{1}:G\rightarrow Diff(S^{1})$. This will necessarily be of the form:$$\varphi_{1}(g)(u)=\theta_{1}(g)u^{\alpha(g)}$$

Here $\theta_{1}:G\rightarrow S^{1}$ and $\alpha:G\rightarrow\{-1,1\}$. The precise nature of these maps is shown in Section 3.5.

We then choose a (not-necessarily effective) group action $\varphi_{2}:G\rightarrow Diff(F)$ such that if we parameterize each component of $\partial F$ in the same way as in Section 2 and then express $\partial F=\{(v,i)|v\in S^{1},i\in\{1,\ldots,n\}\}$, we can write: $$\varphi_{2}(g)|_{\partial F}(v,i)=(\theta_{2}(i,g)v^{\alpha(g)},\beta(g)(i))$$

Here $\theta_{2}:\{1,\ldots,n\}\times G\rightarrow S^{1}$, and $\beta:G\rightarrow perm(\{1,\ldots,n\})$ are such that $\beta(g)(i)=j$ only if $(q_{i},p_{i})=(q_{j},p_{j})$.

Then we define our group action $\varphi:G\rightarrow Diff(\hat{M})$ by: $$(k_{\hat{M}}^{-1}\circ\varphi(g)\circ k_{\hat{M}})(u,x)=(\varphi_{1}(g)(u),\varphi_{2}(g)(x))$$

So now we can fully express $\varphi:G\rightarrow Diff(\hat{M})$ on the boundary of $\hat{M}$ by: $$(k_{T_{\beta(g)(i)}}^{-1}\circ\varphi(g)\circ k_{T_{i}})(u,v)=(\theta_{1}(g)u^{\alpha(g)},\theta_{2}(i,g)v^{\alpha(g)})$$

We note here that (according to the set framing of each boundary torus), each element $g\in G$ acts on a boundary tori $T_{i}$ by mapping it to $T_{\beta(g)(i)}$ with:

\begin{itemize}
\item a rotation by $\theta_{1}(g)$ in the longitudinal direction.

\item a rotation by $\theta_{2}(i,g)$ in the meridianal direction.

\item a reflection in the meridian and longitude if $\alpha(g)=-1$.
\end{itemize}

\subsection{Inducing a Finite, Fiber-Preserving Action on $\partial X$}

We can now induce an action on $\partial X$ by: $$\psi:G\rightarrow Diff(\partial X)$$ $$\psi(g)=d^{-1}\circ\varphi(g)|_{\partial\hat{M}}\circ d$$

This we can fully express (after simplification) as: $$(k_{\partial V_{\beta(g)(i)}}^{-1}\circ\psi(g)\circ k_{\partial V_{i}})(u,v)
	=(\theta_{1}(g)^{-q_{i}}\theta_{2}(i,g)^{p_{i}}u^{\alpha(g)},\theta_{1}(g)^{y_{i}}\theta_{2}(i,g)^{-x_{i}}v^{\alpha(g)})$$
    
Therefore - according to the set framing of each boundary torus) - each element $g\in G$ acts on a $\partial V_{i}$ by mapping it to $\partial V_{\beta(g)(i)}$ with:

\begin{itemize}
\item a rotation by $\theta_{1}(g)^{-q_{i}}\theta_{2}(i,g)^{p_{i}}$ in the longitudinal direction.

\item a rotation by $\theta_{1}(g)^{y_{i}}\theta_{2}(i,g)^{-x_{i}}$ in the meridianal direction.

\item a reflection in the meridian and longitude if $\alpha(g)=-1$.
\end{itemize}

 Alternatively, we could view this action by each element $g\in G$ mapping $\partial V_{i}$ to $\partial V_{\beta(g)(i)}$ with:

\begin{itemize}
\item a rotation by $\theta_{1}(g)$ along $(-q_{j},y_{j})$ curves (along the fibers).

\item a rotation by $\theta_{2}(i,g)$ along $(p_{j},-x_{j})$ curves.

\item a reflection in the meridian and longitude if $\alpha(g)=-1$.
\end{itemize}

\subsection{Extending the Induced Action to $X$.}

We have that:

\[k_{X}^{-1}(X)=\{(u,v,i)|u\in S^{1},v\in D,i\in\{1,\ldots,n\}\}\]

Where $D$ is the unit disc. Hence the action $\psi:G\rightarrow Diff(X)$ straightforwardly extends by coning inwards. This works as the product structure on $X$ is such that the fibration is normalized. Hence, the extended action is fiber-preserving.

\subsection{The Final Action}

So now we have defined finite, fiber- and orientation-preserving actions on $\hat{M}$ and $X$ such that they agree under the gluing map $d:\partial X\rightarrow\partial\hat{M}$. This completes the construction.

We now formally make the definition that we refer to any action $\varphi:G\rightarrow Diff^{fp}_{+}(M)$ that can be constructed as above as an \textit{extended product action}.

We close this subsection with a brief, notable remark:

\begin{rem}

Note that in these examples $\varphi_{1}:G\rightarrow Diff(S^{1})$ and $\varphi_{2}:G\rightarrow Diff(F)$ are not injections in all cases and so not necessarily effective actions.

\end{rem}

\subsection{Conditions for $\varphi_{1}:G\rightarrow Diff(S^{1})$ and $\varphi_{2}:G\rightarrow Diff(F)$}

We here establish some necessary and sufficient conditions in the construction of $\varphi_{1}:G\rightarrow Diff(S^{1})$ and $\varphi_{2}:G\rightarrow Diff(F)$.

\begin{prop}
 
The following are necessary and sufficient conditions on $\theta_{1}:G\rightarrow S^{1}$ and $\alpha:G\rightarrow\{-1,1\}$ for $\varphi_{1}:G\rightarrow Diff(S^{1})$ to be a homomorphism:
\begin{enumerate}
    \item $\alpha:G\rightarrow\{-1,1\}$ is a homomorphism.
     \item $\theta_{1}(g_{1}g_{2})=\theta_{1}(g_{1})\theta_{1}(g_{2})^{\alpha(g_{1})}$ 
\end{enumerate}
\end{prop}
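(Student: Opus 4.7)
The plan is a direct calculation: expand both $\varphi_{1}(g_{1}g_{2})(u)$ and $(\varphi_{1}(g_{1})\circ\varphi_{1}(g_{2}))(u)$ using the prescribed form $\varphi_{1}(g)(u)=\theta_{1}(g)u^{\alpha(g)}$, and then read off the conditions needed to match them for every $u\in S^{1}$.

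First I would write
\[
\varphi_{1}(g_{1}g_{2})(u)=\theta_{1}(g_{1}g_{2})\,u^{\alpha(g_{1}g_{2})},
\]
and, using that $\alpha(g_{1})\in\{-1,1\}$ so exponentiation distributes over the product in $S^{1}$,
\[
(\varphi_{1}(g_{1})\circ\varphi_{1}(g_{2}))(u)=\theta_{1}(g_{1})\bigl(\theta_{1}(g_{2})u^{\alpha(g_{2})}\bigr)^{\alpha(g_{1})}=\theta_{1}(g_{1})\theta_{1}(g_{2})^{\alpha(g_{1})}\,u^{\alpha(g_{1})\alpha(g_{2})}.
\]
For sufficiency, assuming (1) and (2), the coefficients of $u$ agree by (2) and the exponents $\alpha(g_{1}g_{2})$ and $\alpha(g_{1})\alpha(g_{2})$ agree by (1), so $\varphi_{1}(g_{1}g_{2})=\varphi_{1}(g_{1})\circ\varphi_{1}(g_{2})$.

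For necessity, I would specialize the equality $\varphi_{1}(g_{1}g_{2})(u)=(\varphi_{1}(g_{1})\circ\varphi_{1}(g_{2}))(u)$ at two cleverly chosen values of $u$: plugging in $u=1$ yields the constant identity $\theta_{1}(g_{1}g_{2})=\theta_{1}(g_{1})\theta_{1}(g_{2})^{\alpha(g_{1})}$, giving condition (2); then plugging in some $u\neq 1$ (or comparing derivatives at $u=1$), one concludes $u^{\alpha(g_{1}g_{2})}=u^{\alpha(g_{1})\alpha(g_{2})}$ for all $u$, forcing $\alpha(g_{1}g_{2})=\alpha(g_{1})\alpha(g_{2})$, i.e.\ condition (1).

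There is no real obstacle here; the only mild subtlety is justifying that the two power maps $u\mapsto u^{\pm1}$ on $S^{1}$ are distinct, so that the exponents must literally match once the leading coefficients have been cancelled. Since these two maps differ at every $u\notin\{\pm1\}$, this is immediate. Thus both directions follow from the same one-line identity.
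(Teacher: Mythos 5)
Your proposal is correct and follows essentially the same route as the paper: expand $\varphi_{1}(g_{1}g_{2})(u)$ and $(\varphi_{1}(g_{1})\circ\varphi_{1}(g_{2}))(u)$, set $u=1$ to extract condition (2), and then cancel the leading coefficients to force $\alpha(g_{1}g_{2})=\alpha(g_{1})\alpha(g_{2})$, giving condition (1). The only difference is that you spell out the sufficiency direction and the injectivity of the exponent comparison explicitly, which the paper leaves implicit.
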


\begin{proof}  We calculate $\varphi_{1}(g_{1}g_{2})(u)=\theta_{1}(g_{1}g_{2})u^{\alpha(g_{1}g_{2})}$ and:
$$\varphi_{1}(g_{1})\circ\varphi_{1}(g_{2})(u)=\theta_{1}(g_{1})(\theta_{1}(g_{2})u^{\alpha(g_{2})})^{\alpha(g_{1})}=\theta_{1}(g_{1})\theta_{1}(g_{2})^{\alpha(g_{1})}u^{\alpha(g_{2})\alpha(g_{1})}$$
These are equal for all values of $u$. Hence for $u=1$ we have that $\theta_{1}(g_{1}g_{2})=\theta_{1}(g_{1})\theta_{1}(g_{2})^{\alpha(g_{1})}$.

This establishes part ii) and then implies that $u^{\alpha(g_{1}g_{2})}=u^{\alpha(g_{1})\alpha(g_{2})}$ which establishes part i).
\end{proof}

\begin{prop}
 The following are necessary conditions on $\theta_{2}:\{1,\ldots,n\}\times G\rightarrow S^{1}$, $\alpha:G\rightarrow\{-1,1\}$, and $\beta:G\rightarrow perm(\{1,\ldots,n\})$ if $\varphi_{2}:G\rightarrow Diff(F)$ is a homomorphism:
\begin{enumerate}
    \item $\alpha:G\rightarrow\{-1,1\}$ is a homomorphism.
    \item $\beta:G\rightarrow perm(\{1,\ldots,n\})$ is a homomorphism.
    \item $\theta_{2}(i,g_{1}g_{2})=\theta_{2}(\beta(g_{2})(i),g_{1})\theta_{2}(i,g_{2})^{\alpha(g_{1})}$
\end{enumerate}
\end{prop}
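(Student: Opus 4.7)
The plan is to mimic the proof of Proposition 3.1 almost verbatim, but carry the extra index $i$ and permutation $\beta$ through the computation. I would start by assuming $\varphi_2$ is a homomorphism and compute both sides of the identity $\varphi_2(g_1 g_2) = \varphi_2(g_1)\circ\varphi_2(g_2)$ restricted to $\partial F$, using the given formula $\varphi_2(g)|_{\partial F}(v,i)=(\theta_2(i,g)v^{\alpha(g)},\beta(g)(i))$. Direct substitution gives
\[
\varphi_2(g_1g_2)|_{\partial F}(v,i) = \bigl(\theta_2(i,g_1g_2)v^{\alpha(g_1g_2)},\,\beta(g_1g_2)(i)\bigr),
\]
while composing the two actions yields
\[
\bigl(\theta_2(\beta(g_2)(i),g_1)\,\theta_2(i,g_2)^{\alpha(g_1)}v^{\alpha(g_1)\alpha(g_2)},\,\beta(g_1)\beta(g_2)(i)\bigr).
\]

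Next I would extract the three conditions by equating coordinates. Equality of the second coordinate for every $i$ immediately gives that $\beta:G\to\mathrm{perm}(\{1,\ldots,n\})$ is a homomorphism, yielding (ii). For the first coordinate, I would evaluate at $v=1$ to strip off the $v$-factor, which produces exactly the identity
\[
\theta_2(i,g_1g_2)=\theta_2(\beta(g_2)(i),g_1)\,\theta_2(i,g_2)^{\alpha(g_1)},
\]
giving (iii). Substituting this identity back into the full equation forces $v^{\alpha(g_1g_2)}=v^{\alpha(g_1)\alpha(g_2)}$ for all $v\in S^1$, whence $\alpha$ is a homomorphism, giving (i). This is the same two-step extraction as in Proposition 3.1, just bookkeeping an index.

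There is really no hard step; the only thing worth being careful about is the order of composition inside the $\theta_2$ argument, namely that the index fed into $\theta_2(\cdot,g_1)$ is $\beta(g_2)(i)$ and not $\beta(g_1)(i)$, because $\varphi_2(g_1)$ is applied after $\varphi_2(g_2)$ has already moved the boundary component $i$ to $\beta(g_2)(i)$. Getting this right is what produces the twisted cocycle form of condition (iii). Finally, I would note explicitly (as the statement carefully does) that these conditions are only asserted to be \emph{necessary}: unlike the $S^1$ case of Proposition 3.1, a diffeomorphism of $F$ is not determined by its restriction to $\partial F$, so the boundary identities above do not, by themselves, reconstruct $\varphi_2$ on the interior of $F$.
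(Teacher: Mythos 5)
Your proposal is correct and follows essentially the same route as the paper: compute both sides of $\varphi_{2}(g_{1}g_{2})=\varphi_{2}(g_{1})\circ\varphi_{2}(g_{2})$ on $\partial F$, read off (ii) from the index coordinate, set $v=1$ to get (iii), and deduce (i) from what remains. Your second coordinate $\beta(g_{1})\beta(g_{2})(i)$ is in fact the correct composition order (the paper's displayed line has it reversed, though its stated conclusion agrees with yours), and your closing remark about why the conditions are only necessary matches the paper's intent.
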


\begin{proof}
 We first calculate $\varphi_{2}(g_{1}g_{2})(v,i)=(\theta_{2}(i,g_{1}g_{2})v^{\alpha(g_{1}g_{2})},\beta(g_{1}g_{2})(i))$. Then calculate:
 \begin{align*}
 \varphi_{2}(g_{1})\circ\varphi_{2}(g_{2})(v,i)	&=\varphi_{2}(g_{1})(\theta_{2}(i,g_{2})v^{\alpha(g_{2})},\beta(g_{2})(i))\\
	&=(\theta_{2}(\beta(g_{2})(i),g_{1})(\theta_{2}(i,g_{2})v^{\alpha(g_{2})})^{\alpha(g_{1})},\beta(g_{2})\circ\beta(g_{1})(i))\\
	&=(\theta_{2}(\beta(g_{2})(i),g_{1})\theta_{2}(i,g_{2})^{\alpha(g_{1})}v^{\alpha(g_{1})\alpha(g_{2})},\beta(g_{2})\circ\beta(g_{1})(i))\\
\end{align*} 

These are again equal for all values of $v$ and $i$. We immediately have that $\beta(g_{1}g_{2})=\beta(g_{1})\circ\beta(g_{2})$ and part ii) follows. 

Now, for $v=1$ we have that $\theta_{2}(i,g_{1}g_{2})=\theta_{2}(\beta(g_{2})(i),g_{1})\theta_{2}(i,g_{2})^{\alpha(g_{1})}$.

This establishes part iii) and leaves $v^{\alpha(g_{1}g_{2})}=v^{\alpha(g_{1})\alpha(g_{2})}$ which establishes part i).
\end{proof}

\section{Actions on $\hat{M}$}

In order to find out to what extent finite, fiber- and orientation-preserving actions are extended product actions, we first need to establish a result regarding actions on $\hat{M}$. In this section we always take $F$ to be an orientable surface with boundary and $\hat{M}$ to be the fibered manifold that has boundary made up of tori described earlier.

The main result we prove in this section is an adaptation of Theorem 2.3 in \cite{Meeks1986}. It will state that if $\hat{M}$ has a product structure, then there is another product structure on $\hat{M}$ that remains invariant under the group action provided the restricted product structures on each boundary component are respected by the action. Moreover, the two product structures foliate the boundary tori identically.

We first state some preliminary results. 

\begin{lem}
 Let $\varphi:G\rightarrow Diff(F)$ be a finite group action with $F$ not a disc. Then $F$ contains a $\varphi$-equivariant essential simple arc. 
 \end{lem}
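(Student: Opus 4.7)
The plan is to pass to the quotient orbifold $O = F/G$ and find an essential simple arc in the underlying surface $O_U$ that avoids the singular locus, then lift it to produce the desired equivariant arc in $F$. Since $G$ is finite, after averaging a Riemannian metric we may assume $G$ acts by isometries, so $O$ is a $2$-orbifold whose underlying space $O_U$ is a compact surface with non-empty boundary, and the projection $p : F \to O_U$ is a branched cover whose branch locus is a finite union of interior cone points (together with boundary reflector arcs if some element of $G$ reverses orientation near $\partial F$).

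First I would produce an essential simple arc $\bar\alpha$ in $O_U$ disjoint from the singular locus of $O$. If $O_U$ is not a disc, this is standard: a compact surface with non-empty boundary that is not a disc admits a properly embedded essential arc, and by transversality we may perturb $\bar\alpha$ off the finitely many cone points. If $O_U$ is a disc, then since $F$ itself is not a disc the orbifold $O$ must carry at least two singular features---a disc with at most one cone point and no reflector arcs admits only disc branched covers---and we take $\bar\alpha$ to be an arc in $O_U$ separating two such singularities, so that any subdisc of $O_U$ cobounded by $\bar\alpha$ and a boundary subarc is forced to contain a singular point.

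Next I would lift. Because $\bar\alpha$ avoids the branch locus, $p^{-1}(\bar\alpha)$ is a disjoint union of properly embedded simple arcs in $F$ on which $G$ acts by permuting components. Let $\alpha$ be any single component; then for every $g \in G$, $\varphi(g)(\alpha)$ is again a component of $p^{-1}(\bar\alpha)$ and is therefore either equal to $\alpha$ or disjoint from $\alpha$, so $\alpha$ is $\varphi$-equivariant. Essentiality of $\alpha$ in $F$ follows because a boundary-parallel disc $D \subset F$ cobounded by $\alpha$ and a subarc of $\partial F$ would project to a subdisc of $O_U$ cobounded by $\bar\alpha$ and a boundary arc, and comparing the singular content of the projected subdisc against our choice of $\bar\alpha$ yields a contradiction.

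The main obstacle is the case when $O_U$ is a disc: there one needs a careful count of the cone points and reflector arcs to certify the existence of an essential orbifold arc, and one must verify that the lifted arc is not merely boundary-parallel in $F$. Once that case is handled, the other case and the lifting step are straightforward, so the entire argument reduces to an orbifold-theoretic existence statement which can be read off from the fact that a disc orbifold with a single mild singularity has only disc branched covers.
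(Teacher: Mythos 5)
Your argument follows exactly the same route as the paper's proof: pass to the quotient orbifold $F/\varphi$, choose an essential simple arc in its underlying space avoiding the singular locus, and lift. You in fact supply more detail than the paper's two-sentence proof does --- notably the case where the underlying space of the quotient is a disc (where one must use cone points or reflector data to certify essentiality) and the verification that a lifted component is both equivariant and essential --- all of which the paper leaves implicit.
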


\begin{proof}
 $F/\varphi$ is a $2$-orbifold. We can then pick an essential simple arc in the underlying space of $F/\varphi$ that doesn't intersect the cone points and then lift this to a $\varphi$-equivariant essential simple arc in $F$.
 \end{proof}

\begin{lem}
 Let $\psi:G\rightarrow Diff_{+}^{fp}(T)$ be a finite group action on a Seifert-fibered torus. Suppose that there exists a fibering product structure $k:S^{1}\times S^{1}\rightarrow T$. Then $\psi:G\rightarrow Diff_{+}^{fp}(T)$ is equivalent to a fiber-preserving group action that leaves the product structure $k:S^{1}\times S^{1}\rightarrow T$ invariant. Moreover, the conjugating map is fiber-preserving and isotopic to the identity.
 \end{lem}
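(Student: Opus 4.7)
The plan is to build a fiber-preserving self-diffeomorphism of $T$ isotopic to the identity that conjugates $\psi$ into a product action with respect to $k$. Identifying $T\cong S^{1}\times S^{1}$ via $k$, write $(k^{-1}\circ\psi(g)\circ k)(u,x)=(f_{g}(u,x),h_{g}(x))$; fiber-preservingness forces $h_{g}$ to depend only on $x$, so there is an induced finite action $\bar{\psi}:G\to Diff(S^{1})$ on the base given by $g\mapsto h_{g}$. I would proceed in two stages, producing conjugating factors that are each fiber-preserving and isotopic to the identity.

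\textit{Straightening the base.} Averaging a smooth Riemannian metric on $S^{1}$ over the finite action $\bar\psi$ yields a $G$-invariant metric of the same total length, and since all smooth metrics on $S^{1}$ of equal total length are related by an orientation-preserving isometry isotopic to the identity, this produces $H:S^{1}\to S^{1}$ isotopic to the identity with every $H^{-1}\circ h_{g}\circ H$ an isometry. Lifting to $\hat{H}:=k\circ(\mathrm{id}\times H)\circ k^{-1}$, which is fiber-preserving and isotopic to the identity, and conjugating $\psi$ by $\hat{H}$, we reduce to the case where every $h_{g}$ lies in $O(2)$, acting on $S^{1}$ as a rotation or reflection.

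\textit{Producing an equivariant section.} I would then construct a smooth nullhomotopic $\tau:S^{1}\to S^{1}$ such that $s(x):=(\tau(x),x)$ is $G$-equivariant, equivalently such that $f_{g}(\tau(x),x)=\tau(h_{g}(x))$ for every $g$ and $x$. Pick a closed fundamental arc $D\subset S^{1}$ for the isometric base action, specify $\tau$ at one endpoint of $D$, smoothly extend $\tau$ across $D$ so as to land at the equivariance-forced value at the other endpoint, and then propagate by the $G$-action to the remaining fundamental domains. The cocycle identity coming from $\psi$ being a homomorphism, applied via the chain rule, makes both values and derivatives match automatically at the junctions; by adjusting the winding of $\tau|_{D}$ one can force the resulting $\tau$ to have degree zero. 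Setting $\Phi:=k\circ[(u,x)\mapsto(u\cdot\tau(x),x)]\circ k^{-1}$, which is fiber-preserving and — because $\tau$ is nullhomotopic — isotopic to the identity, the equivariance of $s$ translates into $\Phi^{-1}\circ\psi(g)\circ\Phi$ being a product action in $k$-coordinates. The total conjugating map $\hat{H}\circ\Phi$ is then fiber-preserving and isotopic to the identity, as required.

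The main obstacle is the equivariant-section stage: simultaneously arranging smoothness at the fundamental-domain junctions and the correct (trivial) homotopy class for $\tau$. The hardest subcase is when $G$ contains elements acting as reflections on the base, since then the fundamental arc has endpoints fixed by a local dihedral stabilizer, forcing $\tau$ to pass through specific fiber points fixed by the stabilizer's action on the fiber and restricting the homotopy-class freedom available for the extension.
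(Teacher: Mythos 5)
There is a genuine gap at the final step. An equivariant section $s(x)=(\tau(x),x)$ only guarantees that, after conjugating by $\Phi$, the transformed action $(u,x)\mapsto(f'_{g}(u,x),h_{g}(x))$ satisfies $f'_{g}(1,x)=1$ for all $x$; it does not make $f'_{g}(u,x)$ independent of $x$, which is what the paper's definition of a product action (and of leaving $k$ invariant) requires. Concretely, take $G=\mathbb{Z}_{2}$ acting by $(u,x)\mapsto(g_{x}(u),-x)$ where each $g_{x}$ is a diffeomorphism of $S^{1}$ fixing $1$, $g_{-x}=g_{x}^{-1}$, and $g_{x}$ varies nontrivially with $x$: this is a finite fiber-preserving action with the invariant section $u=1$, yet it is not a product in these coordinates and your $\Phi$ (with $\tau\equiv 1$) does nothing to it. What is missing is a straightening in the fiber direction: you need either a whole $G$-invariant foliation by sections transverse to the fibers (so that the two invariant leaf spaces give the new product structure), or, equivalently, a $G$-invariant fiberwise metric obtained by averaging, normalized so each fiber has length $2\pi$; combined with your section as a basepoint and an orientation of the fibers this identifies each fiber with $S^{1}$ coherently and makes the action fiberwise a rotation or reflection, hence a product. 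Your two-stage plan (straighten the base, then fix a section) is salvageable, but as written the conclusion ``the equivariance of $s$ translates into $\Phi^{-1}\circ\psi(g)\circ\Phi$ being a product action'' is false.

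For comparison, the paper avoids the section-by-hand construction entirely: it first notes that finiteness forces $\psi(g)_{*}=\pm\mathrm{id}$, then passes to the quotient orbifold $T/\psi$, which is a torus or $S^{2}(2,2,2,2)$, chooses a fibering product structure (i.e., both transverse foliations at once) downstairs, and lifts it to an invariant product structure upstairs, checking it is isotopic to $k$; the $S^{2}(2,2,2,2)$ case is handled by first quotienting by the fiber-orientation-preserving normal subgroup and then accommodating the residual ``spin'' involution. Lifting both foliations from the quotient is precisely what supplies the fiber-direction straightening that your argument omits.
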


\begin{proof}
 First note that necessarily, $\psi(g)_{*}=\pm\left[\begin{array}{cc}
1 & 0\\
0 & 1
\end{array}\right]_{k}^{k}$. This follows from the fact that $\pm\left[\begin{array}{cc}
1 & c\\
0 & 1
\end{array}\right]$ has finite order only if $c=0$.

We then note that by \cite{thurstongeometry}, the only possible quotient types are a torus or $S^{2}(2,2,2,2)$. By John Kalliongis and Andy Miller in \textit{The symmetries of genus one handlebodies} \cite{kalliongis1991symmetries} these refer respectively to actions of groups $\mathbb{Z}_{m}\times\mathbb{Z}_{n}$ and $Dih(\mathbb{Z}_{m}\times\mathbb{Z}_{n})$ where $\mathbb{Z}_{m}\times\mathbb{Z}_{n}$ acts by preserving the orientation of the fibers and the dihedral $\mathbb{Z}_{2}$ subgroup of $Dih(\mathbb{Z}_{m}\times\mathbb{Z}_{n})$ acts by reversing the orientation of the fibers.

We first consider the torus case. This will receive an induced fibration from $T$. We can then pick a fibering product structure on $T/\psi$ . This product structure can be lifted to an invariant fibering product structure $k':S^{1}\times S^{1}\rightarrow T$. According to this product structure, the group acts as rotations along the fibers or along loops $k'(\{u\}\times S^{1})$. As such, it preserves any fibration up to isotopy. So we can assume that $k':S^{1}\times S^{1}\rightarrow T$ is in fact isotopic to the original product structure $k:S^{1}\times S^{1}\rightarrow T$.

We then let $f=k'\circ k^{-1}$. So that $k^{-1}\circ f^{-1}\circ\psi(g)\circ f\circ k=k'^{-1}\circ\psi(g)\circ k'$

This is a product. It also follows that $f$ is fiber-preserving and isotopic to the identity.

If the action has quotient of $S^{2}(2,2,2,2)$, then we note that as the fiber orientation-preserving subgroup $\mathbb{Z}_{m}\times\mathbb{Z}_{n}$ is a normal subgroup, we can consider the induced $\mathbb{Z}_{2}$-action on the quotient of the $\mathbb{Z}_{m}\times\mathbb{Z}_{n}$-action. This is necessarily a “spin” action by \cite{kalliongis1991symmetries} and we can pick a fibering product structure on $T/(\mathbb{Z}_{m}\times\mathbb{Z}_{n})$ as above but that further remains invariant under the “spin” action.
\end{proof}

\begin{lem}
 Let $k:S^{1}\times F\rightarrow\hat{M}$ and $k':S^{1}\times F\rightarrow\hat{M}$ be fibering product structures so that they foliate the boundary tori identically. Then $k(\{1\}\times F)$ is freely isotopic to $k'(\{1\}\times F)$.
 \end{lem}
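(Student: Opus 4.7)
The plan is to reduce the statement to a question about sections of the trivial circle bundle $S^{1}\times F\to F$ and then construct an explicit ambient isotopy. Because $k$ and $k'$ are both fibering product structures on the same Seifert-fibered manifold $\hat{M}$, the circle foliations they induce on $\hat{M}$ coincide (both equal the fiber foliation of the Seifert fibration), so the composition $f:=k'^{-1}\circ k:S^{1}\times F\to S^{1}\times F$ is a fiber-preserving diffeomorphism. Write $f(u,x)=(\phi(u,x),\bar f(x))$, where $\bar f$ is a diffeomorphism of $F$ and $\phi:S^{1}\times F\to S^{1}$ is smooth. Then $f(\{1\}\times F)$ is a section of the projection to $F$, and by setting $h(y):=\phi(1,\bar f^{-1}(y))$ it is identified with the graph of a smooth map $h:F\to S^{1}$. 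The problem reduces to constructing a free ambient isotopy inside $S^{1}\times F$ from the graph of $h$ to $\{1\}\times F$, since conjugating by $k'$ then produces the required free isotopy in $\hat{M}$.

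Next I would translate the boundary hypothesis into concrete behavior of $h$. The assumption that $k$ and $k'$ foliate each boundary torus identically means that for each $i$ the curve $k(\{u\}\times\partial_{i}F)$ coincides, as an unparametrized circle in $T_{i}$, with some curve $k'(\{u'\}\times\partial_{i}F)$. Unwinding this through $f$ forces $\phi(u,x)$ to be independent of $x$ along each boundary component, so in particular $h$ is locally constant on $\partial F$.

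To finish, I would choose a smooth homotopy $h_{t}:F\to S^{1}$ with $h_{0}=h$, $h_{1}\equiv 1$, every $h_{t}$ constant on each component of $\partial F$, and use it to define an ambient isotopy $H_{t}(u,x)=(u\cdot h_{t}(x)\cdot h(x)^{-1},x)$ of $S^{1}\times F$ (built from a continuous fiberwise path in $S^{1}$ along the homotopy), which carries the graph of $h$ to $\{1\}\times F$. I expect the main obstacle to be precisely the existence of this null-homotopy $h_{t}$ relative to its values on $\partial F$. This is the step that requires that $h$ genuinely arises as the comparison of two fibering product structures of a trivial $S^{1}$-bundle that agree on the boundary: this condition forces the relevant relative class to vanish and yields the homotopy $h_{t}$ needed to complete the construction.
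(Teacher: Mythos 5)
Your reduction is exactly the paper's: the paper also forms $k'^{-1}\circ k$, composes with $l(u,x)=(u,k_{2}^{-1}(x))$ to straighten the base (your passage to the graph of $h$), and then declares the two sections ``freely isotopic by isotoping along the fibers.'' Up to that point you are, if anything, more careful than the paper --- you are the only one who actually invokes the hypothesis that the boundary foliations agree, extracting from it that $h$ is locally constant on $\partial F$.

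The gap is your final sentence. You correctly identify that the entire content of ``isotoping along the fibers'' is the existence of a homotopy $h_{t}$ from $h$ to the constant map, but you then merely assert that the hypotheses ``force the relevant relative class to vanish.'' They do not, at least not from what you have established. Local constancy of $h$ on $\partial F$ only says that $[h]\in H^{1}(F;\mathbb{Z})\cong[F,S^{1}]$ restricts to zero in $H^{1}(\partial F;\mathbb{Z})$; when $F$ has genus $g>0$ the kernel of that restriction has rank $2g$ (the classes dual to the genus curves), so $[h]$ need not vanish. And if $[h]\neq 0$ the conclusion genuinely fails: the graph of $h$ and $\{1\}\times F$ carry non-conjugate subgroups of $\pi_{1}(S^{1}\times F)\cong\mathbb{Z}\times\pi_{1}(F)$, since the first coordinate of $(\deg(h\circ\gamma),\gamma)$ is a conjugation invariant, so the two surfaces are not even freely homotopic, let alone isotopic. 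So either an additional argument is needed showing $[h]=0$ for the particular pairs to which the lemma is applied (in Theorem 4.5 it is used with $k'=\varphi(g)\circ k$ for a finite-order $\varphi(g)$, where one might hope to exploit periodicity), or the hypothesis of the lemma must be strengthened beyond agreement of the boundary foliations. To be fair, the paper's own proof is silent on exactly this point --- it never checks that the fiberwise paths can be chosen continuously --- so you have located the real difficulty; you just have not resolved it.
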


\begin{proof}
 Consider, $k'^{-1}\circ k:S^{1}\times F\rightarrow S^{1}\times F$. Necessarily, this can be expressed in the form $(k'^{-1}\circ k)(u,x)=(k_{1}(u,x),k_{2}(x))$.

So now by composing with the diffeomorphism $l:S^{1}\times F\rightarrow S^{1}\times F$ given by $l(u,x)=(u,k_{2}^{-1}(x))$, we have that $(k'^{-1}\circ k\circ l)(u,x)=(k_{1}(u,x),x)$.

Consider $(k\circ l)(S^{1}\times\{x\})$ and $(k')(S^{1}\times\{x\})$. These are the same fiber. Hence $(k\circ l)(\{1\}\times F)$ and $(k')(\{1\}\times F)$ are freely isotopic by isotoping along the fibers. 
\end{proof}

The final required result is the equivariant Dehn's Lemma. We state it here in the form used by Allan Edmonds in his paper \textit{A topological proof of the equivariant Dehn lemma} \cite{edmonds1986topological}.

\begin{lem}
Let $\varphi:G\rightarrow Diff(\hat{M})$ be a finite group action. Let $\gamma\subset\partial\hat{M}$ be a simple closed curve such that $\gamma$ is:
\begin{enumerate} 
\item null-homotopic in $\hat{M}$.
\item $\varphi$-equivariant.
\item transverse to the exceptional set of $\varphi$. 
\end{enumerate}
Then there exists an embedded disc D such that: 
\begin{enumerate}
\item $\gamma=\partial D$
\item $D$ is $\varphi$-equivariant.
\item $D$ is transverse to the exceptional set of $\varphi$.
\end{enumerate}
\end{lem}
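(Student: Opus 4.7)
The plan is to establish this as a $G$-equivariant strengthening of the classical Dehn's Lemma. The first observation is that by Papakyriakopoulos' Dehn Lemma there exists \emph{some} embedded disc $D_0 \subset \hat{M}$ with $\partial D_0 = \gamma$; the task is to promote $D_0$ to a $G$-equivariant disc without losing embeddedness.

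I would consider the $G$-orbit $\{\varphi(g)(D_0) : g \in G\}$, which consists of finitely many embedded discs all bounded by $\gamma$ since $\gamma$ is $\varphi$-equivariant. After a small perturbation of $D_0$ rel $\partial D_0$, which can be made equivariant by averaging an isotopy over $G$, these discs meet in a transverse $1$-dimensional submanifold, and by the hypothesis that $\gamma$ is transverse to the exceptional set, the intersections of $D_0$ with the fixed point set of any nontrivial $g \in G$ are in general position (arcs and isolated points).

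The key step is an equivariant cut-and-paste argument of the type used by Edmonds, organized via a tower construction. At an innermost circle of intersection $c \subset D_0 \cap \varphi(g)(D_0)$, swap the innermost disc in $D_0$ capping $c$ with the corresponding cap in $\varphi(g)(D_0)$, and then carry out the same swap simultaneously for every element of the $G$-orbit. Because the intersection pattern is itself $G$-invariant, choosing innermost circles can be done equivariantly, and each swap strictly reduces the number of intersection circles. Iterating until no intersection circles remain and then resolving the remaining boundary arcs along $\gamma$ via innermost-arc swaps yields a family of pairwise-disjoint embedded discs permuted by $G$; taking any one of them gives the required equivariant disc $D$, with transversality to the exceptional set preserved throughout.

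The main obstacle is making sure the cut-and-paste is genuinely equivariant and does not introduce new intersections with translates $\varphi(h)(D_0)$ for other $h \in G$. This is precisely what the tower construction handles: one passes to a sequence of regular coverings of regular neighborhoods of the singular set of the mapped disc, arranging at the top of the tower that the $G$-action and the intersection pattern are simple enough that a single equivariant swap can be performed cleanly, and then pushes the resulting equivariant disc back down the tower. The transversality hypothesis is what allows each stage of the tower to be compatible with the group action.
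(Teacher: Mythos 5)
This lemma is not proved in the paper at all: it is quoted verbatim as the Equivariant Dehn's Lemma from Edmonds' \emph{A topological proof of the equivariant Dehn lemma}, so there is no internal argument to compare yours against. What you have written is therefore an attempt to reprove a substantial standalone theorem (originally due to Meeks--Yau by minimal surface techniques, with Edmonds supplying the topological proof) rather than to supply a short argument the author omitted. Your sketch does correctly identify the two ingredients of Edmonds' approach --- a Papakyriakopoulos-style tower together with equivariant cut-and-paste, with the transversality hypothesis on the exceptional set keeping the surgeries compatible with the action --- so as a roadmap it points at the right proof.

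As a proof, however, it has a genuine gap exactly where you flag the ``main obstacle.'' The claim that an orbit-wide innermost-circle swap ``strictly reduces the number of intersection circles'' is false as stated: when you replace the cap of $c \subset D_0 \cap \varphi(g)(D_0)$ and simultaneously perform the corresponding replacement on every translate $\varphi(h)(D_0)$, the new pieces you have grafted in are themselves translates of subdiscs of $D_0$, and they can meet the other modified discs (and each other) in intersection circles that were not present before; moreover an innermost circle need not have trivial stabilizer, in which case the two caps being swapped are exchanged or fixed by group elements and the swap is not even well defined without further care. Your final paragraph defers all of this to ``the tower construction,'' but the tower is precisely the device whose construction and descent constitute the hard content of Edmonds' paper --- asserting that at the top of the tower ``the intersection pattern is simple enough'' is restating the theorem, not proving it. The honest options here are either to do what the paper does and cite Edmonds (or Meeks--Yau), or to reproduce the tower argument in detail; the intermediate sketch does not close the loop.
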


The proof of the theorem then follows that of \cite{Meeks1986} in an adapted and expanded form.

\begin{thm}
 Let $k:S^{1}\times F\rightarrow\hat{M}$ be a fibering product structure such that the finite group action $\varphi:G\rightarrow Diff_{+}^{fp}(\hat{M})$ respects the restricted product structures on each boundary torus. Then there exists an isotopic fibering product structure $k':S^{1}\times F\rightarrow\hat{M}$ such that the group action $\psi:G\rightarrow Diff(S^{1}\times F)$ given by $\psi(g)=k'^{-1}\circ\varphi(g)\circ k'$ for each $g\in G$ is a product action and foliates the boundary identically to $k$.
 \end{thm}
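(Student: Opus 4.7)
The plan is to adapt Theorem 2.3 of Meeks--Scott via an induction on a complexity measure of $F$ (for instance $-\chi(F)+|\pi_{0}(\partial F)|$). The first move is a boundary normalization: for each boundary torus $T_{i}$, Lemma 4.2 supplies a fiber-preserving self-isotopy of the identity that conjugates $\varphi|_{T_{i}}$ into a genuine product with respect to $k|_{S^{1}\times(\partial F)_{i}}$. Extending these isotopies over collars of $\partial\hat M$ by fiber-preserving bump functions, and by the identity elsewhere, yields a fiber-preserving ambient isotopy of $\hat M$. The resulting fibering product structure $k_{0}$ is isotopic to $k$, foliates $\partial\hat M$ identically to $k$, and the conjugated action is a product on every boundary torus.

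The base case of the induction is $F\cong D^{2}$, so that $\hat M$ is a solid torus with a single boundary torus on which the action has been arranged to be a product. Any finite, fiber- and orientation-preserving action on the solid torus whose boundary behavior is a product extends to a product on the full solid torus after an isotopy rel boundary; one way is to use Lemma 4.4 to produce an equivariant family of meridian discs bounding the orbit of $k_{0}(\{1\}\times\partial F)$, then interpolate to a product across the interior via the boundary data. For the inductive step, take $F$ not a disc and apply Lemma 4.1 to the induced action $\varphi_{F}:G\rightarrow Diff(F)$ to obtain a $\varphi_{F}$-equivariant essential simple arc $\alpha\subset F$. The union $\mathcal{A}=\bigcup_{g\in G}k_{0}(S^{1}\times\varphi_{F}(g)(\alpha))$ is then a $\varphi$-invariant collection of properly embedded product annuli in $\hat M$ whose boundary circles are regular fibers on $\partial\hat M$. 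Cutting $\hat M$ along $\mathcal{A}$ produces a (possibly disconnected) manifold $\hat M'$ which inherits a fibering product structure $S^{1}\times F'\rightarrow\hat M'$ with $F'$ of strictly smaller complexity; applying the induction hypothesis to a representative of each $G$-orbit of components (with its stabilizer action), and then re-gluing along $\mathcal{A}$, delivers a product structure $k'$ on $\hat M$.

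The main obstacle is verifying the hypothesis of the induction on $\hat M'$. On the boundary tori of $\hat M'$ inherited from $\partial\hat M$ the action is a product by construction, but the new boundary tori created by doubling the annuli of $\mathcal{A}$ acquire their product structure from the factorization $S^{1}\times\alpha$, and the action need not respect this a priori. A further appeal to Lemma 4.2, applied on each new torus equivariantly and rel $\partial\hat M$, straightens these product structures so that the action once again restricts to a product; the key point is that $\mathcal{A}$ meets $\partial\hat M$ transversally in fibers, so these isotopies can be arranged not to disturb the earlier normalization. Once this is done the induction closes, the successive isotopies concatenate into a single isotopy from $k$ to $k'$, the boundary foliations agree throughout, and a final check confirms that the conjugating map is fiber-preserving and that $\psi$ is a genuine product action as required.
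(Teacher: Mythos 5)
Your overall architecture matches the paper's: induction on the complexity of $F$, the disc case handled with the equivariant Dehn's lemma, the inductive step by cutting along the vertical annuli over a $\varphi_{F}$-equivariant essential arc from Lemma 4.1, and Lemma 4.2 used to straighten the action on the newly created boundary tori before regluing. Two points, however, are genuine problems. First, your induction measure $-\chi(F)+|\pi_{0}(\partial F)|$ need not strictly decrease: cutting a once-punctured torus (complexity $1+1=2$) along a non-separating essential arc yields an annulus (complexity $0+2=2$), so the induction as stated does not close. The paper inducts on $\chi(F)$ alone, which strictly increases on every piece obtained by cutting along an essential arc, and that is the measure you should use.

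Second, and more seriously, your base case is asserted rather than proved. The sentence claiming that any finite, fiber- and orientation-preserving action on the solid torus whose boundary behavior is a product extends to a product after an isotopy rel boundary \emph{is} the base case, and ``interpolate to a product across the interior via the boundary data'' is precisely the step that carries all the content. After the equivariant Dehn's lemma produces the orbit $\{D_{1},\ldots,D_{s}\}$ of meridian discs, the paper cuts the solid torus along this orbit into balls $B_{i}\cong I\times D$ and must then construct a $Stab(B_{1})$-invariant foliation of $B_{1}$ by discs transverse to the $I$-fibers; this requires identifying the possible quotient ball orbifolds (boundary $S^{2}(n,n)$ or $S^{2}(2,2,n)$, via Kalliongis--Ohashi and the Smith conjecture for ball orbifolds) and exhibiting an invariant disc foliation in each case before translating to the remaining $B_{i}$ by group elements. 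Without some such input on finite actions on the ball, there is no reason the interpolation you invoke can be made equivariant. A smaller remark: your opening ``boundary normalization'' via Lemma 4.2 is redundant here, since the hypothesis that $\varphi$ respects the restricted product structures already says the action is a product on each boundary torus with respect to $k$; that application of Lemma 4.2 belongs to the proof of Theorem 5.3, where the hypothesis is being arranged, not inside this proof.
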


\begin{proof}
 We proceed by induction on the Euler characteristic of $F$.

\noindent \underline{Initial Case: $\chi(F)=1$}

We therefore have $\hat{M}$ as a trivially fibered solid torus with $k:S^{1}\times F\rightarrow\hat{M}$, a fibering product structure. By the product structure on the boundary, we have a foliation by meridianal circles that each bound a disc and the usual longitudinal Seifert fibration by circles. So any of the meridianal circles are necessarily $\varphi$-equivariant. Then taking such a circle, we apply the equivariant Dehn's Lemma (Lemma 4.5) to yield a $\varphi$-equivariant disc $D$ whose boundary agrees with the product structure on the boundary of the solid torus. We now decompose along $Orb(D)=\{D_{1},\ldots,D_{s}\}$ to yield a collection $B_{1},\ldots,B_{s}$ of balls, each which are homeomorphic to $I\times D$ and fibered by arcs.

So starting with $B_{1}$  we have the action $\varphi_{1}:Stab(B_{1})\rightarrow Diff(B_{1})$ given by $\varphi_{1}(g)=\varphi(g)|_{B_{1}}$. 

Note that the quotient orbifold $B_{1}/\varphi_{1}$ necessarily has boundary either $S^{2}(n,n)$ or $S^{2}(2,2,n)$. This follows from John  Kalliongis and Ryo Ohashi in their paper \textit{Finite actions on the $2$-sphere, the projective plane and $I$-bundles over the projective plane} \cite{kalliongis2018}, where they show that these are the only orientable quotients of $S^{2}$ where the action fixes one point or exchanges two points (corresponding to the two discs $D_{1},D_{2}$).

We here use the proof of the Smith conjecture (see ball orbifolds in Francis Bonahon's \textit{Geometric structures on 3-manifolds} \cite{sher2001handbook}) to see that $B_{1}/\varphi_{1}$ has the following possible forms with induced (orbifold) foliations on part of the boundary shown by Figure 1.

\begin{figure}[ht]
\centering
\includegraphics[height=3cm]{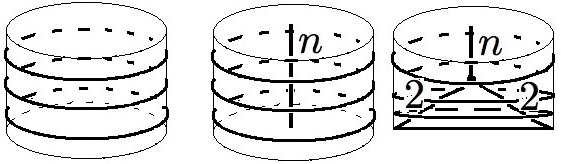}
\caption{Possible quotients with induced orbifold foliations on part of the boundary}
\end{figure}

On the part of the boundary that lifts into $\partial\hat{M}$, the first two are foliated simply by circles, and the third is foliated by circles and one $1$-orbifold with cone points of order $2$ on either end.

This first can then clearly be foliated by discs that agree with the foliation by circles on the boundary. The second can be foliated by discs with a cone point of order $n$ with the discs agreeing with the foliation by circles on the boundary. 

The third can be foliated by discs with cone points order $n$ - with the discs having boundaries given by the circles - and a $2$-orbifold of the form shown in Figure 2. This has Thurston data set given by $(;n)$.

\begin{figure}[ht]
\centering
\includegraphics[height=1.5cm]{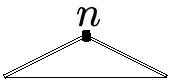}
\caption{An element of the orbifold foliation of the third possible $B_{1}/\varphi_{1}$}
\end{figure}

Each of these can be taken to intersect each induced orbifold $I$-fiber once and will lift to an invariant foliation of $B_{1}$ by discs that each intersect each $I$-fiber once.

We therefore have a product structure $k_{1}:I\times F\rightarrow B_{1}$ that remains invariant under the action $\varphi_{1}:Stab(B_{1})\rightarrow Diff(B_{1})$. Furthermore, its' foliation (by arcs and circles) on the part of its boundary that intersects with the boundary of $\hat{M}$ is equal to the restricted foliation from $k:S^{1}\times F\rightarrow\hat{M}$.

We now translate to the remaining $B_{i}$. For each $B_{i}$, there is some $g_{i}\in G$ such that $\varphi(g_{i})(B_{1})=B_{i}$ and we can then define product structures $k_{i}:I\times F\rightarrow B_{i}$ by $k_{i}=\varphi(g_{i})\circ k_{1}$. 

Note that as each $\varphi(g_{i})$ leaves the original product structure $k:S^{1}\times F\rightarrow\hat{M}$ invariant on the boundary of $\hat{M}$ then each $k_{i}:I\times F\rightarrow B_{i}$ foliates $B_{i}$ (by arcs and circles) on the part of its' boundary that intersects with the boundary of $\hat{M}$ the same way as the restricted foliation from $k:S^{1}\times F\rightarrow\hat{M}$.

Then for any $g\in G$ such that $\varphi(g)(B_{i})=B_{j}$ we have $g=g_{j}hg_{i}^{-1}$ for some $h\in Stab(B_{1})$ and can calculate $k_{j}^{-1}\circ\varphi(g)\circ k_{i}=k_{1}^{-1}\circ\varphi(h)\circ k_{1}$. This is a product by above.

So now we have a collection of product structures on each $B_{1},\ldots,B_{s}$ that remain invariant under the action. We view these now as invariant foliations by arcs and discs. By construction, we yield invariant foliations of $\hat{M}$ by circles and discs. This is possible as each of the invariant foliations of $B_{i}$ are equal to the restricted foliation from $k:S^{1}\times F\rightarrow\hat{M}$ on the part of its' boundary that intersects with the boundary of $\hat{M}$.

These invariant foliations give our required $k':S^{1}\times F\rightarrow\hat{M}$.

\noindent \underline{Inductive Step:}

We now fix an integer $c<1$ and suppose the result holds for $\chi(F)>c$. We proceed to prove the case where $\chi(F)=c$ by induction.

Our strategy is to break $\hat{M}$ into pieces each of which fibers over a surface with Euler characteristic greater than $c$. We can then apply the inductive hypothesis before reassembling $\hat{M}$ and deriving the result for $\chi(F)=c$.

We induce the action $\varphi_{F}:G_{F}\rightarrow Diff(F)$ on the base space of the fibration and then apply Lemma 4.1 to yield a $\varphi_{F}$-equivariant essential simple arc in $F$. We call this arc $\lambda$ and define $A_{1}$ to be the annulus made up of fibers that project to $\lambda$. As $\varphi:G\rightarrow Diff(\hat{M})$ is fiber-preserving, this is necessarily $\varphi$-equivariant.

Cutting along the collection of annuli $Orb(A_{1})$ will yield a disjoint collection $\{\hat{M}_{1},\ldots,\hat{M}_{n}\}$ of manifolds with boundary which fiber over surfaces $\{F_{1},\ldots,F_{n}\}$. Necessarily, each of these have greater Euler number than $F$. 

Now pick $\hat{M}_{1}$ and pick any boundary torus $T$ of $\hat{M}_{1}$ that contains $A_{1}$. This consists of annuli that were originally contained in a boundary torus of $\hat{M}$ before being cut open - we refer to these as $A'_{1},\ldots,A'_{m}$ - or some annuli in the collection $Orb(A_{1})$ - we refer to these as $A_{1},\ldots,A_{m}$. Note that there must be an equal number of each type of annulus. Each of $A'_{1},\ldots,A'_{m}$ inherit product structures $k_{A'_{i}}:S^{1}\times I\rightarrow A'_{i}$ that are respected under the restricted action of $Stab(T)$.

Now consider $T/Stab(T)$. This will necessarily be either another torus consisting of two glued annuli - one referring to the projection of $A_{1}$ and the other referring to the projection of $A'_{1}$ - or an $S^{2}(2,2,2,2)$ consisting of two glued together $D(2,2)$ - again, one referring to the projection of $A_{1}$ and the other referring to the projection of $A'_{1}$. This follows from \cite{thurstongeometry}.

\noindent \textbf{Case 1: $T/Stab(T)$ is a torus.}

The annulus covered by $A'_{1}$ has an induced Seifert fibration and foliation by arcs. The annulus covered by $A_{1}$ has an induced Seifert fibration and can by foliated by arcs so that $T/Stab(T)$ is foliated by circles that cross each fiber once. 

\noindent \textbf{Case 2: $T/Stab(T)$ is $S^{2}(2,2,2,2)$}

The $D(2,2)$ covered by $A'_{1}$ has an induced orbifold Seifert fibration and orbifold foliation as shown below in Figure 3. The $D(2,2)$ covered by $A_{1}$ has an induced orbifold Seifert fibration and can be orbifold foliated so that $T/Stab(T)$ is orbifold foliated so that each leaf of the foliation crosses each fiber once. 

\begin{figure}[ht]
\centering
\includegraphics[height=3.5cm]{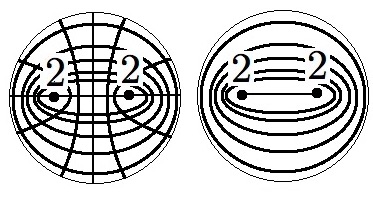}
\caption{The two $D(2,2)$ covered by $A'_{1}$ and $A_{1}$}
\end{figure}

Moreover these orbifold foliations can be chosen so that they lift to give $T$ a foliation that is invariant under $Stab(T)$; agrees with the foliation by arcs given by $k_{A'_{i}}:S^{1}\times I\rightarrow A'_{i}$; and is isotopic to the induced foliation of $T$ from the original $k:S^{1}\times F\rightarrow\hat{M}$. This follows from Lemma 4.2. 

This then defines a product structure $k_{T}:S^{1}\times S^{1}\rightarrow T$ invariant under the action of $Stab(T)$ which restricts to a product structure $k_{A_{1}}:S^{1}\times I\rightarrow A_{1}$ invariant under $Stab(A_{1})$.

We now translate to each $T_{i}\in Orb_{Stab(\hat{M}_{1})}(T)$ by taking some $g_{i}\in G$ such that $\varphi(g_{i})(T)=T_{i}$. We then define product structures $k_{T_{i}}:S^{1}\times S^{1}\rightarrow T_{i}$ by $k_{T_{i}}=\varphi(g_{i})\circ k_{T_{1}}$.

For any $g\in G$ with $\varphi(g)(T_{i})=T_{j}$ for some $i,j$, we have that $g=g_{j}g'g_{i}^{-1}$ for some $g'\in Stab(T_{1})$. So then $k_{T_{j}}^{-1}\circ\varphi(g)\circ k_{T_{i}}=k_{T_{1}}^{-1}\circ\varphi(g')\circ k_{T_{1}}$. Hence it is a product and the product structures on each of the tori $T_{i}$ are respected under $Stab(\hat{M}_{1})$. 

We do this for each orbit of boundary components of $\hat{M}_{1}$ to yield product structures on each boundary tori that are respected under $Stab(\hat{M}_{1})$ and that agree with the inherited product structure from the original boundary of $\hat{M}$. 

We then translate these product structures to the boundaries of each $\hat{M}_{i}$.

We can now begin to reconstruct $\hat{M}$ and we can assume that we have respected product structures on each of the connected components of the union of $\partial\hat{M}$ and $Orb(A_{1})$. Pick the first connected component $C$ that yielded $T$ when we cut as shown in Figure 4.

\begin{figure}[ht]
\centering
\includegraphics[height=3.5cm]{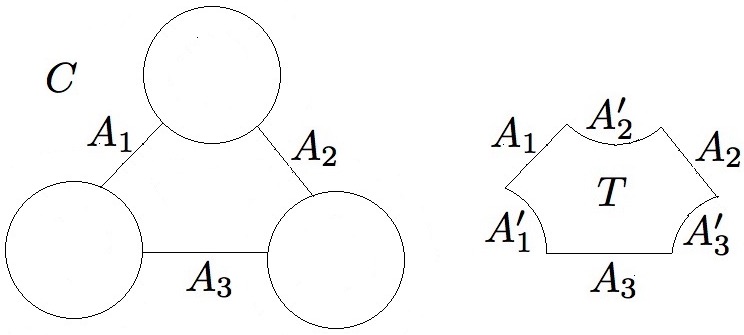}
\caption{A connected component $C$ of the union of $\partial\hat{M}$ and $Orb(A_{1})$}
\end{figure}

The product structure on this connected component is necessarily isotopic to the original product structure by construction. Suppose that the product structure on some other connected component $C'$ was defined by translating by $\varphi(g)$. We now note that $k:S^{1}\times F\rightarrow\hat{M}$ and $\varphi(g)\circ k:S^{1}\times F\rightarrow\hat{M}$ satisfy the requirements of Lemma 4.3. Hence applying the lemma, we yield that the restricted product structure on $C'$ from $\varphi(g)\circ k:S^{1}\times F\rightarrow\hat{M}$ is isotopic to the original product structure $k:S^{1}\times F\rightarrow\hat{M}$.

Hence, in regular neighborhoods of each of the connected components, we adjust the product structure $k:S^{1}\times F\rightarrow\hat{M}$ to equal the invariant product structures on the connected components.

It then follows that the respected product structures on each of the boundary tori of $\hat{M}_{1}$ extend within.

We can then apply the inductive hypothesis to assume that $k_{\hat{M}_{1}}:S^{1}\times F_{1}\rightarrow\hat{M}_{1}$ is in fact invariant under the action of $Stab(\hat{M}_{1})$.

We translate this product structure to each $\hat{M}_{i}$ to yield the required invariant product structure.
\end{proof}

\begin{rem} 
We remark here that it is not sufficient simply that there are product structures on the boundary tori that are respected by the action. It is required also that the product structures can be extended within. We give the following example to illustrate this:
\end{rem}

\begin{exmp} 
Let $F$ be an annulus and $k:S^{1}\times F\rightarrow\hat{M}$ be a fibering product structure. Let $G=\mathbb{Z}_{m}$ act on $\hat{M}$ by simply rotating by $\frac{2\pi}{m}$ along the fibers. This action will preserve any fibering product structure (up to isotopy) on each boundary torus. 

Now pick meridians on the first torus to be the loops that are $(0,1)$ curves according to $k:S^{1}\times F\rightarrow\hat{M}$ and meridians on the second torus to be loops that are $(1,1)$ curves according to $k:S^{1}\times F\rightarrow\hat{M}$. These are both left invariant, but there is no product structure on $\hat{M}$ that restricts to these on the boundary.
\end{exmp}

\section{Main Result}
We now prove the main result, which states that given a condition on the obstruction class, any finite, orientation and fiber-preserving action on a closed, compact, and orientable Seifert 3-manifold that fibers over an orientable base space is an extended product action

To prove this, we first state Theorem 2.8.2 of Richard Canary and Darryl McCullough in their book \textit{Homotopy equivalences of 3-manifolds and deformation theory of Kleinian groups} \cite{canary2004homotopy}: 

\begin{thm}
Suppose that each of $(M_{1},\underline{\underline{m_{1}}})$ and $(M_{2},\underline{\underline{m_{2}}})$ is a Seifert-fibered space with nonempty boundary and with fixed admissible fibration, but that neither $(M_{i},\overline{\underline{\underline{m_{i}}}})$ is a solid torus with $\overline{\underline{\underline{m_{1}}}}=\overline{\underline{\underline{\phi}}}$. Let $f:(M_{1},\underline{\underline{m_{1}}})\rightarrow(M_{2},\underline{\underline{m_{2}}})$ be an admissible diffeomorphism, and suppose that for some regular fiber $\gamma$ in $M_{1}$, $f(\gamma)$ is homotopic in $M_{2}$ to a regular fiber. Then $f$ is admissibly isotopic to a fiber-preserving diffeomorphism. If $f$ is already fiber-preserving on some union $U$ of elements of $\underline{\underline{m_{1}}}$, then the isotopy may be chosen to be relative to $U$.
 \end{thm}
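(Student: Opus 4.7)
The plan is to exploit the essential uniqueness of admissible Seifert fibrations on $3$-manifolds that are not solid tori. First I would transport the fibration of $(M_{2},\underline{\underline{m_{2}}})$ back to $M_{1}$ via $f^{-1}$, obtaining a second admissible Seifert fibration on $M_{1}$ whose regular fibers are the $f^{-1}$-images of regular fibers of $M_{2}$. Since $f(\gamma)$ is homotopic to a regular fiber $\gamma'$ of $M_{2}$, the fiber $f^{-1}(\gamma')$ is freely homotopic to $\gamma$, so the original fibration of $M_{1}$ and this pulled back fibration have freely homotopic generic fibers.

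The crucial step is then a uniqueness-of-fibration result: two admissible Seifert fibrations on a $3$-manifold with nonempty boundary, whose generic fibers are freely homotopic and whose underlying manifold (with empty boundary pattern) is not a solid torus, are ambiently isotopic through an admissible isotopy. This is where the hypotheses of the theorem enter: the excluded solid torus case is precisely where uniqueness can fail, since a solid torus carries many inequivalent Seifert fibrations. I would prove this uniqueness inductively along an admissible hierarchy for $(M_{1},\underline{\underline{m_{1}}})$, reducing to Waldhausen-type rigidity on the Haken pieces while handling the small exceptional cases such as $T^{2}\times I$ and twisted $I$-bundles over the Klein bottle by direct case analysis. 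This is the main obstacle, and it is really the heart of the theorem; everything else is bookkeeping.

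Granted the uniqueness, let $h_{t}:M_{1}\rightarrow M_{1}$ be an admissible ambient isotopy with $h_{0}=\mathrm{id}$ and $h_{1}$ carrying the original fibration to the pulled back one. Then $f\circ h_{1}:(M_{1},\underline{\underline{m_{1}}})\rightarrow(M_{2},\underline{\underline{m_{2}}})$ is admissibly isotopic to $f$ and sends fibers of the original fibration of $M_{1}$ to fibers of $M_{2}$, hence is fiber-preserving. For the relative statement, note that on $U$ the two fibrations of $M_{1}$ already agree since $f$ is fiber-preserving there; a relative version of the uniqueness result, proved by refining the hierarchy argument so that each cutting surface is disjoint from $U$ and each intermediate isotopy is stationary on $U$, produces an isotopy $h_{t}$ that is the identity on $U$. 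The resulting $f\circ h_{1}$ is then fiber-preserving and admissibly isotopic to $f$ relative to $U$, completing the proof.
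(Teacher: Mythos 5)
The paper does not prove this statement at all: it is quoted verbatim as Theorem 2.8.2 of Canary and McCullough \cite{canary2004homotopy} and used as a black box, so there is no internal proof to compare your argument against. Judged on its own terms, your outline identifies the correct strategy --- pull the fibration of $M_{2}$ back through $f$, invoke uniqueness of admissible Seifert fibrations up to admissible isotopy, and post-compose $f$ with the ambient isotopy --- and this is in fact how the result is proved in the cited source, where it rests on their uniqueness-of-fibrations theorem (their Theorem 2.8.1). The final bookkeeping step ($f\circ h_{1}$ is fiber-preserving and admissibly isotopic to $f$, and the relative version follows from a relative uniqueness statement) is sound.

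The gap is that the ``crucial step'' you defer to --- that two admissible fibrations with freely homotopic generic fibers on a manifold that is not a solid torus with empty boundary pattern are admissibly isotopic, in both absolute and relative forms --- is not a lemma you can wave at: it carries essentially all of the content of the theorem, and your one-sentence plan for it (induct along a hierarchy, cite Waldhausen rigidity, handle $T^{2}\times I$ and the twisted $I$-bundle over the Klein bottle ``by direct case analysis'') is where all the real work lives. In particular, the exceptional small Seifert pieces admit genuinely non-isotopic fibrations, and it is precisely the hypothesis that $f(\gamma)$ is homotopic to a regular fiber that selects the correct one; your sketch names these cases but does not show how that hypothesis is threaded through the hierarchy induction, nor how admissibility of the isotopy (compatibility with the boundary patterns $\underline{\underline{m_{i}}}$) is preserved at each stage. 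As a roadmap your proposal is faithful to the standard proof; as a proof it is a reduction to an unproved statement of comparable difficulty.
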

 
Here $\underline{\underline{m_{i}}}$ refer to \textit{boundary patterns} of each $M_{i}$. These are finite sets of compact, connected surfaces in $\partial{M_{i}}$, such that the components of the intersections of pairs of elements are arcs or circles, and if any three elements meet, their intersection is a finite collection of points at which three intersection arcs meet. An \textit{admissable fibration} is such that the boundary pattern consists of only tori and annuli, and an \textit{admissable map} is one that sends boundary patterns to boundary patterns.

This then leads us to what we will require:

\begin{lem}
 Let $W$ be a Seifert-fibered torus and let $h:T\rightarrow T$ be a fiber-preserving diffeomorphism with induced homology map  $h_{*}=id$. Then $h:T\rightarrow T$ can be extended to a fiber-preserving diffeomorphism $\overline{h}:T\times I\rightarrow T\times I$ with $\overline{h}(x,1)=(h(x),1), \overline{h}(x,0)=(x,0)$. Here $T\times I$ is fibered as a unique extended fibration. 
 \end{lem}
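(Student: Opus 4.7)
The plan is to reduce the lemma to an application of Theorem 5.1 above. The key steps are: (i) produce an ambient isotopy on $T$ from the identity to $h$; (ii) use it to build a (not necessarily fiber-preserving) extension $\overline{h}_{0}$ of $h$ to $T \times I$ that already realizes the prescribed boundary values; and (iii) invoke Theorem 5.1 to replace $\overline{h}_{0}$ by a fiber-preserving $\overline{h}$ without disturbing the boundary.

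For step (i), since the mapping class group of the torus acts faithfully on $H_{1}$ (the isomorphism $\pi_{0}(\mathrm{Diff}(T)) \cong GL_{2}(\mathbb{Z})$ being standard), the hypothesis $h_{*} = \mathrm{id}$ forces $h$ to be isotopic to the identity in $\mathrm{Diff}(T)$. Pick such an isotopy $\{h_{t}\}_{t \in I}$ with $h_{0} = \mathrm{id}$ and $h_{1} = h$, and reparametrize it so that $h_{t} \equiv \mathrm{id}$ on a collar $t \in [0, \varepsilon]$ and $h_{t} \equiv h$ on a collar $t \in [1-\varepsilon, 1]$. For step (ii), set $\overline{h}_{0}(x, t) = (h_{t}(x), t)$. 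Then $\overline{h}_{0}$ is a diffeomorphism of $T \times I$ satisfying the required boundary equations, and by the collar reparametrization it is fiber-preserving on an open neighborhood of $\partial(T \times I)$, though generally not on the interior.

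For step (iii), equip $T \times I$ with the unique Seifert fibration extending the given fibration on $T$, and take the boundary pattern to be $\{T \times \{0\}, T \times \{1\}\}$, which makes this fibration admissible. Then: $T \times I$ is not a solid torus; $\overline{h}_{0}$ is admissible; and for any regular fiber $\gamma$ lying at level $t_{0}$, the image $\overline{h}_{0}(\gamma)$ is freely homotopic in $T \times I$ to a regular fiber, simply by letting $t_{0}$ flow down to $0$ where $h_{0} = \mathrm{id}$. The hypotheses of Theorem 5.1 are therefore met; taking $U = \partial(T \times I)$, where $\overline{h}_{0}$ is already fiber-preserving, yields a fiber-preserving $\overline{h}$ admissibly isotopic to $\overline{h}_{0}$ relative to $U$. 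This $\overline{h}$ satisfies $\overline{h}(x, 0) = (x, 0)$ and $\overline{h}(x, 1) = (h(x), 1)$, which is exactly the conclusion.

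The delicate point is the rel-$U$ clause of Theorem 5.1 applied with $U$ equal to the full boundary: it requires $\overline{h}_{0}$ to be fiber-preserving on all of $U$, which is exactly what the collar reparametrization in step (i) guarantees. Beyond verifying this bookkeeping and the uniqueness of the extended Seifert fibration on $T \times I$ (a consequence of the product structure), no essential obstacle remains; the triviality of $h$-action on $H_{1}$ combined with Theorem 5.1 carries the argument.
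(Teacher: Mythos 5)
Your proposal is correct and follows essentially the same route as the paper: build the track $\overline{h}_{0}(x,t)=(h_{t}(x),t)$ of an isotopy from $\mathrm{id}$ to $h$ (which exists because $h_{*}=\mathrm{id}$), give $T\times I$ the boundary pattern consisting of its two boundary tori, and apply Theorem 5.1 rel the boundary, where $\overline{h}_{0}$ is already fiber-preserving. Your collar reparametrization and the explicit check that $\pi_{0}(\mathrm{Diff}(T))\cong GL_{2}(\mathbb{Z})$ are minor elaborations of steps the paper leaves implicit, not a different argument.
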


\begin{proof}
 We note first that an isotopy to the identity exists. We then need only check that such an isotopy can be taken to fiber-preserving. 

As $h_{*}=id$ there exists a diffeomorphism $H:W\times I\rightarrow T$ such that $H(x,1)=h(x)$ and $H(x,0)=x$ with $H_{t}:T\rightarrow T$ a diffeomorphism for each $t\in I$. 

We can then define the diffeomorphism $\tilde{H}:T\times I\rightarrow T\times I$ by $\tilde{H}(x,t)=(H(x,t),t)$. This diffeomorphism is fiber-preserving on the boundary of $T\times I$. 

We then assign $T\times I$ the boundary pattern consisting of the union of its' two boundary tori. Certainly $\tilde{H}$ is an admissible diffeomorphism and moreover it is the identity on one boundary component, so the condition of the image of a fiber being homotopic to a fiber is trivially satisfied. 

It then remains to apply Theorem 5.1 to yield an isotopic map $\bar{h}$ that is fiber-preserving and agrees with $\tilde{H}$ on the boundary. In particular, $\bar{h}(x,1)=\tilde{H}(x,1)=(H(x,1),1)=(h(x),1)$ and $\bar{h}(x,0)=\tilde{H}(x,0)=(H(x,0),0)=(x,0)$.
\end{proof}

It is now possible to restate and prove our main result:

\begin{thm}
\label{thm:associativity}
 Let $M$ be an orientable Seifert 3-manifold that fibers over an orientable base space. Let $\varphi:G\rightarrow Diff_{+}^{fp}(M)$ be a finite group action on $M$ such that the obstruction class can expressed as $$b=\sum_{i=1}^{m}(b_{i}\cdot\#Orb_{\varphi}(\alpha_{i}))$$ for a collection of fibers $\{\alpha_{1},\ldots,\alpha_{m}\}$ and integers $\{b_{1},\ldots,b_{m}\}$. Then $\varphi$ is an extended product action. 
 \end{thm}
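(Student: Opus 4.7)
The plan is to realize $\varphi$ as an extended product action by equivariantly reconstructing the decomposition of Section~3. First I would drill a $\varphi$-invariant regular neighborhood of every critical fiber together with every fiber in each orbit $Orb_{\varphi}(\alpha_{i})$; since $\varphi$ permutes fibers of matching Seifert type, both collections are invariant, and the complement $\hat{M}$ inherits a fiber-preserving $G$-action. Because $\hat{M}$ has non-empty boundary over an orientable base, its Seifert fibration is trivial, so $\hat{M}\cong S^{1}\times F$ admits a fibering product structure. For the refilling I assign the original pairs $(q_{i},p_{i})$ to the critical fibers and the pair $(1,b_{i})$ to every fiber in $Orb_{\varphi}(\alpha_{i})$; the hypothesis $b=\sum b_{i}\cdot\#Orb_{\varphi}(\alpha_{i})$ together with Corollary~2.2 ensures that this refilling recovers the original Seifert invariants of $M$.

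Next, to apply Theorem~4.6 I need $\varphi|_{\hat{M}}$ to respect product structures on every boundary torus. For each boundary torus $T_{j}\subset\partial\hat{M}$, Lemma~4.2 produces a fiber-preserving diffeomorphism isotopic to the identity that conjugates the restricted $Stab(T_{j})$-action into one respecting a chosen product structure $k_{T_{j}}$. I would propagate this choice across each orbit of boundary tori by the rule $k_{T_{i}}=\varphi(g_{i})\circ k_{T_{1}}$ used throughout Section~3, obtaining a $\varphi$-equivariant family of product structures on $\partial\hat{M}$. Lemma~5.2 then extends each of these boundary adjustments across a collar $T_{j}\times I$ by a fiber-preserving diffeomorphism that is the identity on the inner end, so the local adjustments glue into a single fiber-preserving self-map of $\hat{M}$, supported near $\partial\hat{M}$, after conjugation by which $\varphi$ respects the entire collection $\{k_{T_{j}}\}$.

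Now Theorem~4.6 yields an isotopic, $\varphi$-invariant fibering product structure $k'\colon S^{1}\times F\to\hat{M}$ whose boundary foliation matches the $k_{T_{j}}$. Writing $\psi(g)=k'^{-1}\circ\varphi(g)\circ k'=(\varphi_{1}(g),\varphi_{2}(g))$ gives precisely the factor actions and the data $(\theta_{1},\theta_{2},\alpha,\beta)$ required by Subsection~3.1, with $\beta$ encoding how $\varphi$ permutes boundary tori and hence drilled solid tori. Finally, the induced action on $\partial X$ obtained by pulling $\psi$ back through the gluing $d$ extends inward by coning on each solid torus as in Subsection~3.3, because the framings $(q_{i},p_{i})$ and $(1,b_{i})$ chosen above make $\psi|_{\partial V_{i}}$ a rigid rotation in the meridianal direction. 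This exhibits $\varphi$ as an extended product action.

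The principal obstacle is the equivariant boundary step: Lemma~4.2 adjusts one torus at a time, while the conjugating diffeomorphism must live globally on $\hat{M}$ and stay compatible with how $\varphi$ permutes boundary tori. Lemma~5.2 is exactly the mechanism that localizes the boundary adjustment to an inner-trivial collar, so the per-orbit choices glue globally; and the obstruction hypothesis is exactly what makes the first-step drilling yield a trivially fibered $\hat{M}$ whose refilling data can be read off as a product action on $S^{1}\times F$.
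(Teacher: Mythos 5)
Your overall route is the paper's: drill out the orbits, use Corollary~2.2 to redistribute the obstruction class over the new fillings, force the restricted action on $\partial\hat{M}$ to respect product structures via Lemma~4.2 propagated over orbits and Lemma~5.2 on collars, invoke the invariant--product--structure theorem of Section~4, and then extend over the solid tori. However, there is a concrete gap in your refilling step. You assign the original pairs $(q_{i},p_{i})$ to the critical fibers and $(1,b_{i})$ to the fibers in $Orb_{\varphi}(\alpha_{i})$, but the hypothesis allows (and the key applications require) some $\alpha_{i}$ with $b_{i}\neq 0$ whose orbit consists of critical fibers. Take $M=(0,o_{1}|(3,2),(1,b))$ with an action fixing the critical fiber $\alpha$, so $b=b\cdot\#Orb_{\varphi}(\alpha)$: your recipe either refills $N(\alpha)$ with $(3,2)$, producing a manifold with obstruction class $0$, or with $(1,b)$, destroying the critical fiber; neither recovers $M$, and Corollary~2.2 cannot rescue the count because the coefficient $b$ sitting on a critical orbit has nowhere to go. The paper's move is to absorb the coefficient into the critical filling itself, replacing $(q_{i},p_{i})$ by $(q_{i},p_{i}+b_{j}q_{i})$ for each critical fiber in $Orb_{\varphi}(\alpha_{j})$ and using $(1,b_{j})$ only on the regular orbits; then the total modification is $\sum_{i}b_{i}\cdot\#Orb_{\varphi}(\alpha_{i})-b=0$ and Corollary~2.2 applies. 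This is exactly the case that drives Corollaries~5.4--5.6, so it cannot be waved away.

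Two smaller omissions: first, before Lemma~5.2 can extend your boundary conjugations over collars you must know each translated conjugating map $f_{j}=\hat{\varphi}(g_{j})\circ f_{i}\circ(\cdots)$ is isotopic to the identity, equivalently that $\hat{\varphi}(g_{j})_{*}=\pm id$ in the framings adapted to the fillings; the paper gets this (and the fact that $(\hat{\varphi}(g)|_{T_{i}})_{*}=\pm id$ for $g\in Stab(T_{i})$) from a homological diagram using that the action extends into the finite-order solid tori, and you should not skip it. Second, the construction of Section~3 requires $\beta(g)(i)=j$ only if the fillings $(q_{i},p_{i})$ and $(q_{j},p_{j})$ agree; that the action can only carry $V_{i}$ to $V_{j}$ when the filling slopes (not merely the fibration types of the solid tori) coincide is proved in the paper by a short coprimality computation, and your appeal to ``matching Seifert type'' does not by itself deliver it. With these three points repaired your argument coincides with the paper's proof.
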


\begin{proof}
 We let $M$ be the Seifert $3$-manifold with normalized invariants:$$M=(g,o_{1}|(q_{1},p_{1}),\ldots,(q_{n},p_{n}),(1,b))$$
 
Firstly, without loss of generality, we can assume that the orbits of each $\{\alpha_{1},\ldots,\alpha_{m}\}$ are distinct. If $\alpha_{i},\alpha_{j}$ were in the same orbit, then we note that $b_{i}\cdot\#Orb_{\varphi}(\alpha_{i})+b_{j}\cdot\#Orb_{\varphi}(\alpha_{j})=(b_{i}+b_{j})\cdot\#Orb_{\varphi}(\alpha_{i})$ so that we do not require $\alpha_{j}$ for the property to still hold.

Secondly, we can suppose without loss of generality that the first $t$ of the fibers $\{\alpha_{1},\ldots,\alpha_{t}\}$ are regular and each critical fiber $\{\gamma_{1},\ldots,\gamma_{n}\}$ is in the orbit of one of $\{\alpha_{t+1},\ldots,\alpha_{m}\}$. If one is not, it can be added into the collection with a coefficient of zero. This will not change the sum. 

We start by tasking ourselves with rewriting the Seifert pairings to reflect the assumption that the obstruction class can be expressed as: $$b=\sum_{i=1}^{m}(b_{i}\cdot\#Orb_{\varphi}(\alpha_{i}))$$

Begin by letting: $$A=\sum_{i=1}^{t}\#Orb_{\varphi}(\alpha_{i})$$ and then rewriting the Seifert invariants as: $$M=(g,o_{1}|(q_{1},p_{1}),\ldots,(q_{n},p_{n}),(1,b),(1,0)_{1},\ldots,(1,0)_{A})$$

Here each $(1,0)_{i}$ refers to a regular fiber which is in the orbit of some fiber in the collection $\{\alpha_{1},\ldots,\alpha_{t}\}$. Call this collection of fibers $\{\beta_{1},\ldots,\beta_{A}\}$.

Now let $\{\beta_{A+1},\ldots,\beta_{n+A}\}=\{\gamma_{1},\ldots,\gamma_{n}\}$ and note that $\{\beta_{1},\ldots,\beta_{n+A}\}=Orb_{\varphi}(\{\alpha_{1},\ldots,\alpha_{m}\})$.

Define a function: $h:\{1,\ldots,n+A\}\rightarrow\mathbb{Z}$ by $h(j)=b_{i}\:\textrm{if}\:\beta_{j}\in Orb_{\varphi}(\alpha_{i})$.

Take closed, fibered regular neighborhoods $N(\alpha_{1}),\ldots,N(\alpha_{m})$ and then define: $$X=Orb_{\varphi}(N(\alpha_{1})\cup\ldots\cup N(\alpha_{m}))$$ $$\hat{M}=\overline{M\setminus X}$$

So $X$ is a collection of fibered solid tori and $M$ can be reobtained by some (fiber-preserving) gluing map $d:\partial X\rightarrow\partial\hat{M}$. This gluing map corresponds to the presentation: $$M=(g,o_{1}|(q_{1},p_{1}+h(1)q_{1}),\ldots,(q_{n},p_{n}+h(n)q_{n}),(1,h(n+1)),\ldots,(1,h(n+A))$$

This is possible by Corollary 2.2 as $$\sum_{j=1}^{n+A}h(j)=\sum_{i=1}^{m}b_{i}\cdot\#Orb_{\varphi}(\alpha_{i})=b$$

For convenience, denote: \begin{multline*}
(g,o_{1}|(q_{1},p_{1}+h(1)q_{1}),\ldots,(q_{n},p_{n}+h(n)q_{n}),(1,h(n+1)),\ldots,(1,h(n+A)) \\ =(g,o_{1}|(q'_{1},p'_{1}),\ldots,(q'_{n},p'_{n}),(q'_{n+1},p'_{n+1}),\ldots,(q'_{n+A},p'_{n+A}))
\end{multline*}

We then proceed with this equivalent representation.

From Section 2, this gives us a fibering product structure ${\hat{M}}:S^{1}\times F\rightarrow\hat{M}$ and a product structure $k_{X}:S^{1}\times(D_{1}\cup\ldots\cup D_{n+A})\rightarrow X$ so that according to it, each $V_{i}$ in $X$ has a normalized fibration. 

We then have that $(d|_{\partial V_{i}})_{*}=\left[\begin{array}{cc}
x'_{i} & p'_{i}\\
y'_{i} & q'_{i}
\end{array}\right]_{k_{T_{i}}}^{k_{\partial V_{i}}}=\left[\begin{array}{cc}
x'_{i} & p_{i}+h(i)q_{i}\\
y'_{i} & q_{i}
\end{array}\right]_{k_{T_{i}}}^{k_{\partial V_{i}}}$ for the nontrivially fibered solid tori according to these product structures. 

The fibrations on each $V_{i}$ is a $(-q_{i},y'_{i})$ fibration and the action can only send some $V_{i}$ to a $V_{j}$ if they have the same fibration. Hence $(-q_{i},y'_{i})=(-q_{j},y'_{j})$. 

We now show that the action can only send some $V_{i}$ to a $V_{j}$ if they have the same associated fillings.

Beginning with $x'_{i}q_{i}-y'_{i}(p_{i}+h(i)q_{i})=-1$ and $x'_{j}q_{i}-y'_{i}(p_{j}+h(i)q_{i})=-1$ we yield: \begin{align*}
x'_{i}q_{i}(p_{j}+h(i)q_{i})-y'_{i}p_{i}(p_{j}+h(i)q_{i})&=-(p_{j}+h(i)q_{i})\\
x'_{j}q_{i}(p_{i}+h(i)q_{i})-y'_{i}p_{j}(p_{i}+h(i)q_{i})&=-(p_{i}+h(i)q_{i})
\end{align*}
So that $q_{i}(x'_{i}(p_{j}+h(i)q_{i})-x'_{j}(p_{i}+h(i)q_{i}))=p_{i}-p_{j}$.

However, $-q_{i}<p_{i}-p_{j}<q_{i}$, hence $-1<(x'_{i}(p_{j}+h(i)q_{i})-x'_{j}(p_{i}+h(i)q_{i}))<1$, and so $x'_{i}(p_{j}+h(i)q_{i})=x'_{j}(p_{i}+h(i)q_{i})$.

But $x'_{i},(p_{i}+h(i)q_{i})$ are coprime and so are $x'_{j},(p_{j}+h(i)q_{i})$, hence $x'_{i}=x'_{j}$ and $(p_{i}+h(i)q_{i})=(p_{j}+h(i)q_{i})$.

So finally $p_{i}=p_{j}$, as well as $p'_{i}=p'_{j}$ and we can henceforth assume that if the action sends some $V_{i}$ to a $V_{j}$, then the fillings must be the same. Note that this is true also for the fillings of trivially fibered tori by construction.

We here consider $\hat{M}$. It is a Seifert-fibered $3$-manifold with boundary such that there is a fiber-preserving restricted action given by:$$\hat{\varphi}:G\rightarrow Diff_{+}^{fp}(\hat{M})$$ $$\hat{\varphi}(g)=\varphi(g)|_{\hat{M}}$$

We now proceed to show that there is a product structure on $\hat{M}$ such that $\hat{\varphi}$ respects the restricted product structures on the boundary tori. We do so to employ Theorem 4.5. 

Take $T_{i}$ arbitrarily and consider the action given by $\hat{\varphi}(g)|_{T_{i}}$ for each $g\in Stab(T_{i})$.

By restricting $k_{\hat{M}}:S^{1}\times F\rightarrow\hat{M}$ and $k_{X}:S^{1}\times(D_{1}\cup\ldots\cup D_{n+A})\rightarrow X$ as in Section 2 to $k_{T_{i}}:S^{1}\times S^{1}\rightarrow T_{i}$ and $k_{\partial V_{i}}:S^{1}\times S^{1}\rightarrow\partial V_{i}$ we have the following homological diagram: $$\begin{array}{ccccc}
 &  & (d|_{\partial V_{i}})_{*}\\
 & H_{1}(T_{i}) & \leftarrow & H_{1}(\partial V_{i})\\
(\hat{\varphi}(g)|_{T_{i}})_{*} & \downarrow &  & \downarrow & (d|_{\partial V_{i}}^{-1}\circ\hat{\varphi}(g)|_{T_{i}}\circ d|_{\partial V_{i}})_{*}\\
 & H_{1}(T_{i}) & \leftarrow & H_{1}(\partial V_{i})\\
 &  & (d|_{\partial V_{i}})_{*}
\end{array}$$

As the action extends into $V_{i}$ and is finite, we must have that $(d|_{\partial V_{i}}^{-1}\circ\hat{\varphi}(g)|_{T_{i}}\circ d|_{\partial V_{i}})_{*}=\pm id$. Hence $(\hat{\varphi}(g)|_{T_{i}})_{*}=\pm id$ for all $g\in Stab(T_{i})$.

We can then apply Lemma 4.2 to get $f_{i}:T_{i}\rightarrow T_{i}$ such that $f_{i}$ is fiber-preserving, isotopic to the identity, and $k_{T_{i}}^{-1}\circ f_{i}^{-1}\circ\hat{\varphi}(g)|_{T_{i}}\circ f_{i}\circ k_{T_{i}}$ is a product map for each $g\in Stab(T_{i})$.

Now pick $g_{j}\in G$ for each $T_{j}\in Orb(T_{i})$ such that $\hat{\varphi}(g_{j})(T_{i})=T_{j}$. 

We translate the conjugating map $f_{i}:T_{i}\rightarrow T_{i}$ to each $T_{j}\in Orb(T_{i})$ by defining $f_{j}=\hat{\varphi}(g_{j})|_{T_{i}}\circ f_{i}\circ k_{T_{i}}\circ h_{j}\circ k_{T_{j}}{}^{-1}$ where:$$h_{j}(u,v)=\left\{ \begin{array}{ccc}
(u,v) & & \thinspace\textrm{               if }\hat{\varphi}(g_{j})\textrm{ preserves the orientation of the fibers}\\
(u^{-1},v^{-1}) &  & \textrm{if }\hat{\varphi}(g_{j})\textrm{ reverses the orientation of the fibers}
\end{array}\right.$$

Each $f_{j}$ is certainly fiber-preserving, but we must check that they are isotopic to the identity. 

To do so, note that we have the diagram:$$\begin{array}{ccccc}
 &  & \left[\begin{array}{cc}
x'_{i} & p'_{i}\\
y'_{i} & q'_{i}
\end{array}\right]_{k_{T_{i}}}^{k_{\partial V_{i}}}\\
 & H_{1}(T_{i}) & \leftarrow & H_{1}(\partial V_{i})\\
\hat{\varphi}(g_{j})_{*} & \downarrow &  & \downarrow & \pm id\\
 & H_{1}(T_{j}) & \leftarrow & H_{1}(\partial V_{j})\\
 &  & \left[\begin{array}{cc}
x'_{i} & p'_{i}\\
y'_{i} & q'_{i}
\end{array}\right]_{k_{T_{j}}}^{k_{\partial V_{j}}}
\end{array}$$

So that necessarily $\hat{\varphi}(g_{j})_{*}=\pm id$ depending on whether the orientation on the fibers are reversed or not. Consequently, $f_{j}$ is isotopic to the identity.

Then for any $g\in G$, $g=g_{j_{2}}hg_{j_{1}}^{-1}$, for some $h\in Stab(T_{i})$ and some $T_{j_{1}},T_{j_{2}}\in Orb(T_{i})$. We calculate: $k_{T_{j_{2}}}^{-1}\circ f_{j_{2}}^{-1}\circ\hat{\varphi}(g)|_{T_{j_{1}}}\circ f_{j_{1}}\circ k_{T_{j_{1}}}=	\,h_{j_{2}}^{-1}\circ(k_{T_{i}}^{-1}\circ f_{i}^{-1}\circ\hat{\varphi}(h)|_{T_{i}}\circ f_{i}\circ k_{T_{i}})\circ h_{j_{1}}$. So that $k_{T_{j_{2}}}^{-1}\circ f_{j_{2}}^{-1}\circ\hat{\varphi}(g)|_{T_{j_{1}}}\circ f_{j_{1}}\circ k_{T_{j_{1}}}$ is also a product map, and the product structures $f_{j}\circ k_{T_{j}}:S^{1}\times S^{1}\rightarrow T_{j}$ for $T_{j}\in Orb(T_{i})$ are invariant under $\hat{\varphi}$. 

We can now do this for each of the distinct orbits of boundary tori.

As each $f_{j}$ is isotopic to the identity and fiber-preserving, we can employ Lemma 5.2 to define $f\in Diff_{+}^{fp}(\hat{M})$ so that $f|_{T_{j}}=f_{j}$ and $f$ is the identity outside of a regular neighborhood of each boundary torus. $f$ is necessarily isotopic to the identity.

So now, the product structure $f\circ k_{\hat{M}}:S^{1}\times F\rightarrow\hat{M}$ is such that $f\circ k_{T_{j}}:S^{1}\times S^{1}\rightarrow T_{j}$ for each $T_{j}$ are respected under $\hat{\varphi}$ and moreover is isotopic to $k_{\hat{M}}$. 

Then we have what we require to employ Theorem 4.5: a product structure on $\hat{M}$ such that $\hat{\varphi}$ respects the restricted product structures on the boundary tori. So we yield a product structure $k'_{\hat{M}}:S^{1}\times F\rightarrow\hat{M}$ such that each $k_{\hat{M}}^{\prime-1}\circ\hat{\varphi}(g)\circ k'_{\hat{M}}$ is a product map. We can assume that each component of $k_{\hat{M}}^{\prime-1}\circ\hat{\varphi}(g)\circ k'_{\hat{M}}$ is an isometry under some appropriate metrics on $S^{1}$ and $F$.

Therefore, we must have that on each boundary component $T_{i}$: $$(k_{T_{\beta(g)(i)}}^{\prime-1}\circ\hat{\varphi}(g)\circ k'_{T_{i}})(u,v)=(\theta_{1}(g)u^{\alpha_{1}(g)},\theta_{2}(i,g)v^{\alpha_{2}(g)})$$

But now $\alpha_{1}(g)=\alpha_{2}(g)$ as the action is orientation-preserving. 

It remains to show that we can pick a product structure on $X$ that is left invariant. We know that there is a product structure $k'_{X}:S^{1}\times(D_{1}\cup\ldots\cup D_{l})\rightarrow X$ so that according to the product structure $k'_{\hat{M}}:S^{1}\times F\rightarrow\hat{M}$ we have: $$(k_{T_{i}}^{\prime-1}\circ d|_{T_{i}}\circ k\prime_{\partial V_{i}})(u,v)=(u^{x'_{i}}v^{p'_{i}},u^{y'_{i}}v^{q'_{i}})$$

If we let $\varphi_{X}$ be the action restricted to $X$, we have that according to this product structure, the action on the boundary of $X$ looks like:$$(k_{\partial V_{\beta(g)(i)}}^{\prime-1}\circ\varphi_{X}(g)\circ k\prime_{\partial V_{i}})(u,v)=(\theta_{1}(g)^{-q_{i}}\theta_{2}(i,g)^{p_{i}}u^{\alpha(g)},\theta_{1}(g)^{y_{i}}\theta_{2}(i,g)^{-x_{i}}v^{\alpha(g)})$$

That is, it respects the restricted product structures. Hence we can consider $Stab(V_{i})$ for each $V_{i}$ to apply Theorem 4.5 and translate in a similar way to above and in the proof of Theorem 4.5.

This completes the proof.
\end{proof}

As a result of Theorem 5.3 we yield the following: 

\begin{cor} Let $M$ be an orientable Seifert $3$-manifold that fibers over an orientable base space. Let $\varphi:G\rightarrow Diff_{+}^{fp}(M)$ be a finite group action on $M$ such that a fiber is left invariant. Then $\varphi$ is an extended product action. 
\end{cor}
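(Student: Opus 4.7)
The plan is to reduce the corollary directly to Theorem 5.3 by observing that an invariant fiber trivially produces a single-term decomposition of the obstruction class. Concretely, if $\alpha$ is a fiber with $\varphi(g)(\alpha)=\alpha$ for every $g\in G$, then the orbit of $\alpha$ under $\varphi$ consists of $\alpha$ alone, so $\#Orb_{\varphi}(\alpha)=1$.

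Given this, whatever integer value the obstruction class $b$ of $M$ happens to be, we may write
\[
b \;=\; b\cdot 1 \;=\; b\cdot\#Orb_{\varphi}(\alpha),
\]
which is precisely the hypothesis of Theorem 5.3 in the degenerate case $m=1$, $\alpha_{1}=\alpha$, and $b_{1}=b$. Applying Theorem 5.3 then immediately yields that $\varphi$ is an extended product action.

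There is essentially no obstacle to overcome here: the only thing to verify is that the obstruction condition admits a one-term decomposition, and the invariance of $\alpha$ delivers this for free since the size-one orbit divides any integer. In particular, no analysis of the fibration data $(q_{i},p_{i})$, no use of the equivariant Dehn lemma, and no appeal to the product-structure results of Section 4 is required beyond what is already packaged inside Theorem 5.3.
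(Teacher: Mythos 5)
Your proposal is correct and is exactly the argument the paper gives: the invariant fiber has orbit number $1$, so $b=b\cdot\#Orb_{\varphi}(\alpha)$ is a one-term instance of the obstruction condition, and Theorem 5.3 applies. No further comment is needed.
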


\begin{proof}
Let $\alpha$ be the fiber left invariant. Then $\#Orb_{\varphi}(\alpha)=1$ and so $b=b\cdot\#Orb_{\varphi}(\alpha)$.
\end{proof}

\begin{cor}
 Let $M$ be an orientable Seifert $3$-manifold that fibers over an orientable base space with only one cone point of order $q$. Let $\varphi:G\rightarrow Diff_{+}^{fp}(M)$ be a finite group action on $M$. Then $\varphi$ is an extended product action.
 \end{cor}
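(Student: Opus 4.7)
\begin{proof}
The plan is to reduce the statement to Corollary 5.4 by exhibiting a fiber that is left invariant by the action. Since $M$ has exactly one cone point, its normalized Seifert invariants take the form $(g,o_{1}|(q,p),(1,b))$, and there is a unique critical fiber $\gamma$ in $M$.

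Because $\varphi$ is fiber-preserving, every element of $G$ must carry critical fibers to critical fibers. Moreover, by the discussion in Section 2 together with the fibration types analyzed in the proof of Theorem 5.3, a fiber-preserving diffeomorphism can only send a critical fiber to another critical fiber of the same type, that is, with matching Seifert pair $(q_i,p_i)$. Since $\gamma$ is the only critical fiber of $M$, there is no other fiber of its type to which it could be sent, so $\varphi(g)(\gamma)=\gamma$ for every $g\in G$. In particular $\#Orb_{\varphi}(\gamma)=1$, so $\gamma$ is a fiber left invariant by the action.

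We now invoke Corollary 5.4: any finite, fiber- and orientation-preserving action on an orientable Seifert $3$-manifold fibering over an orientable base space that leaves some fiber invariant is an extended product action. Applying this with the invariant fiber $\gamma$ immediately yields that $\varphi$ is an extended product action, as desired.
\end{proof}

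The single step that could conceivably require more care is the claim that the unique critical fiber is pointwise preserved as a set; but this is forced by the matching of Seifert pairs under any fiber-preserving diffeomorphism, so once that observation is made the result drops out of Corollary 5.4 without further work.
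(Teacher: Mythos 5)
Your proof is correct and takes essentially the same route as the paper: both rest on the observation that the unique critical fiber must be preserved by any fiber-preserving action, hence has orbit number $1$, so the obstruction condition $b = b\cdot\#Orb_{\varphi}(\gamma)$ holds. The paper applies Theorem 5.3 directly with this fiber, while you route through Corollary 5.4, but since the paper's proofs of Corollaries 5.4 and 5.5 are the same one-line argument, this is not a substantive difference.
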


\begin{proof}
 Let $\alpha$ be the fiber that refers to the cone point of order $q$. Then $\#Orb_{\varphi}(\alpha)=1$ and so $b=b\cdot\#Orb_{\varphi}(\alpha)$.
\end{proof}

\begin{cor} Let $M$ be an orientable Seifert $3$-manifold that fibers over an orientable base space. Let $\varphi:G\rightarrow Diff_{+}^{fp}(M)$ be a finite group action on $M$ so that there are two fibers $\alpha,\beta$ with $\#Orb_{\varphi}(\alpha),\#Orb_{\varphi}(\beta)$ coprime. Then $\varphi$ can be derived via the construction set out in Section 3.
\end{cor}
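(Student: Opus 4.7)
The plan is to reduce Corollary 5.6 directly to Theorem \ref{thm:associativity}: I need only verify that the obstruction condition
$$b=\sum_{i=1}^{m}(b_{i}\cdot\#Orb_{\varphi}(\alpha_{i}))$$
is satisfied for some choice of fibers and integer coefficients, at which point the conclusion that $\varphi$ is an extended product action is immediate.

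To that end, set $a=\#Orb_{\varphi}(\alpha)$ and $c=\#Orb_{\varphi}(\beta)$. The hypothesis is that $\gcd(a,c)=1$, so by Bézout's identity there exist integers $x,y\in\mathbb{Z}$ with $xa+yc=1$. Multiplying through by the obstruction class $b$ gives
$$b=(bx)a+(by)c=(bx)\cdot\#Orb_{\varphi}(\alpha)+(by)\cdot\#Orb_{\varphi}(\beta).$$
Taking $m=2$, $\alpha_{1}=\alpha$, $\alpha_{2}=\beta$, $b_{1}=bx$, and $b_{2}=by$ then exhibits $b$ in the form required by Theorem \ref{thm:associativity}.

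Applying Theorem \ref{thm:associativity} to this expression immediately yields that $\varphi$ is an extended product action, i.e.\ $\varphi$ can be derived via the construction in Section 3. There is no real obstacle here; the only subtlety is purely number-theoretic (invoking Bézout), and the heavy lifting — namely translating the obstruction-condition identity into the geometric conclusion — has already been carried out in the proof of Theorem \ref{thm:associativity}. This is the same pattern as Corollaries 5.4 and 5.5, where a single invariant fiber makes the expression trivial; here the coprimality of two orbit sizes plays the analogous role of guaranteeing that the integer span $\mathbb{Z}\cdot a+\mathbb{Z}\cdot c$ is all of $\mathbb{Z}$, hence contains $b$.
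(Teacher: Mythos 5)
Your proof is correct and follows exactly the paper's own argument: apply Bézout's identity to the coprime orbit numbers, multiply the resulting identity by $b$ to exhibit the obstruction class in the required form, and invoke Theorem \ref{thm:associativity}. No differences worth noting.
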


\begin{proof}
There exists $x,y\in\mathbb{Z}$ such that $x\cdot\#Orb_{\varphi}(\alpha)+y\cdot\#Orb_{\varphi}(\beta)=1$ and so $b=bx\cdot\#Orb_{\varphi}(\alpha)+by\cdot\#Orb_{\varphi}(\beta)$.
\end{proof}

These corollaries give some simple situations under which the conditions of Theorem 5.3 are satisfied. We use the following section to present some concrete examples of the use of these corollaries.

\section{Examples: Part One}

We give some concrete examples in this section with two specific $3$-manifolds that serve to highlight the corollaries above. We will revisit examples in section 8 after we have further analyzed the condition on the obstruction class.

\begin{exmp}
Take any Seifert manifold with a critical fiber of order different from all others. In particular, we can choose a lens space $M=(0,o_{1}|(3,2))$. This lens space has only one critical fiber of order 3. Drilling out the critical fiber leaves a trivially fibered solid torus.

We can then employ Corollary 5.5 to see that any action on $M$ will be an extended product action of a product action on $S^{1} \times D$. These actions have been well considered in particular in \cite{kalliongis1991symmetries} and are generated by rotations in each component along with the aforementioned "spin" - a reflection in both components. 
\end{exmp}

\begin{exmp}

We consider a Seifert manifold $M$ which fibers over an orientable base space $B$ which has the cone points $2,2,3,3,3$. Now any action on $B$ would necessarily only be able to exchange the two cone points of order $2$ and permute the cone points of order $3$. Hence a critical fiber $\alpha$ referring to one of the cone points of order $2$, must have that $\#Orb_{\varphi}(\alpha)$ is $1$ or $2$. Similarly, there is a critical fiber $\beta$ referring to one of the cone points of order $3$, that must have either $\#Orb_{\varphi}(\beta)$ as $1$ or $3$. If either $\#Orb_{\varphi}(\alpha)$ or $\#Orb_{\varphi}(\beta)$ is $1$, then we can apply Corollary 5.4. If $\#Orb_{\varphi}(\alpha)=2$ and $\#Orb_{\varphi}(\beta)=3$, then we can apply Corollary 5.6.

In all cases any finite, orientation and fiber-preserving action on $M$ must be derived via the construction set out in Section 3. This is regardless of the obstruction class.

We give a specific manifold to illuminate this. Let $M=(0,o_{1}|(2,1),(2,1),(3,1),(3,1),(3,1))$. This is in particular a hyperbolic manifold as the orbifold Euler number of the base space $B=S^{2}(2,2,3,3,3)$ is $\chi(B)=2-(1-\frac{1}{2})-(1-\frac{1}{2})-(1-\frac{1}{3})-(1-\frac{1}{3})-(1-\frac{1}{3})=-1<0$.

So now drilling out these critical fibers will leave $\hat{M}\cong S^{1} \times F$ where $F$ is the closure of $S^{2}$ with 5 discs removed. Any action on $F$ can only exchange two of the boundary components and permute the remaining three. Referring to John Kalliongis and Ryo Ohashi's paper \textit{Finite actions on the $2$-sphere, the projective plane and $I$-bundles over the projective plane} \cite{kalliongis2018}, we learn that we need the group to be a subgroup of a group of the form $Dih(\mathbb{Z}_{3})$ generated by an order three rotation that fixes two boundary components and either an order two rotation or a relection.
\end{exmp}

\section{Obstruction Condition}

If $\varphi:G\rightarrow Diff_{+}^{fp}(M)$ is a finite group action, we henceforth call satisfaction of $$b=\sum_{i=1}^{s}(b_{i}\cdot\#Orb_{\varphi}(\alpha_{i}))$$ for some fibers $\{\alpha_{1},\ldots,\alpha_{s}\}$ and integers $\{b_{1},\ldots,b_{s}\}$, \textit{satisfying the obstruction condition}. 

\begin{rem}
We note that the obstruction condition is not always satisfied. We give a specific example in the following section.
\end{rem}

We now proceed to refine the obstruction condition. First, two lemmas are established and then a proposition which provides a convenient equivalent statement for the obstruction condition that can be used to apply our results.

\begin{lem}
Let $\varphi:G\rightarrow Diff(S)$ be a finite group action on a surface $S$. Suppose that the orbifold $S/\varphi$ has data set $(n_{1},\ldots,n_{k};m_{1},\ldots,m_{l})$. Then the possible orbit numbers under $\varphi$ are $|G|/n_{1},\ldots,|G|/n_{k},|G|/2m_{1},\ldots,|G|/2m_{l}$ and $|G|$.
\end{lem}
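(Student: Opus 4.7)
The plan is to invoke the orbit-stabilizer theorem and reduce the problem to classifying the possible point stabilizers of $\varphi$, then match each stabilizer type to the local model at a point of the quotient orbifold $S/\varphi$. For any $x\in S$, orbit-stabilizer gives $\#Orb_{\varphi}(x)=|G|/|Stab(x)|$, so it suffices to list the possible values of $|Stab(x)|$ as $x$ ranges over $S$ and to identify each value with a datum $n_i$, $m_j$, or the generic case.

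First I would linearize. Since $G$ is finite and acts smoothly on the surface $S$, averaging any Riemannian metric produces a $G$-invariant one, so I may assume $\varphi$ acts by isometries. Then for each $x\in S$, the stabilizer $Stab(x)$ acts faithfully on the tangent plane $T_xS$ via the differential, so it embeds in $O(T_xS)\cong O(2)$. The finite subgroups of $O(2)$ are exactly the trivial group, the cyclic groups, and the dihedral groups, so only these can occur as $Stab(x)$.

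Next I would match each stabilizer type to the local orbifold model at the image of $x$, using the standard description of 2-orbifold singularities as in \cite{thurstongeometry}. A trivial stabilizer corresponds to a regular point of $S/\varphi$ and yields orbit size $|G|$. A cyclic stabilizer of order $n_i$ corresponds to a cone point of order $n_i$, so the $k$ cone points contribute orbit sizes $|G|/n_i$. A dihedral stabilizer of order $2m_j$ corresponds to a corner reflector of order $m_j$, so the $l$ corner reflectors contribute orbit sizes $|G|/(2m_j)$; a reflection stabilizer of order $2$ at a generic mirror-line point is the $m_j=1$ instance of this case. This exhausts the local models recorded by the data set $(n_1,\ldots,n_k;m_1,\ldots,m_l)$, and gives exactly the list of orbit sizes claimed.

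The argument is essentially bookkeeping once the linearization step and the classification of finite subgroups of $O(2)$ are in place. The only part requiring a little care is ensuring the correspondence between local stabilizer type and orbifold singular-point type is used in both directions (each datum is realized by some $x$, and every $x$ fits one of the listed types), but this is standard and presents no substantial obstacle.
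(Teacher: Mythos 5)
Your proof is correct and is essentially the paper's argument viewed from upstairs: the paper counts the $|G|/n_{i}$ (resp.\ $|G|/2m_{j}$) discs in $S$ covering a neighborhood of each cone point (resp.\ corner reflector) of the degree-$|G|$ orbifold cover $S\rightarrow S/\varphi$, which is the same computation as your orbit-stabilizer count $\#Orb_{\varphi}(x)=|G|/|Stab(x)|$ with $|Stab(x)|$ equal to the order of the local group. The only differences are cosmetic: you make explicit the linearization step showing stabilizers are trivial, cyclic, or dihedral, and you at least acknowledge generic mirror-line points (stabilizer of order $2$, orbit number $|G|/2$), a case the paper's proof silently omits and which your ``$m_{j}=1$'' gloss does not quite absorb when no corner reflector of order $1$ appears in the data set.
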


\begin{proof}
 $S$ is an order $|G|$ orbifold cover of $S/\varphi$. Therefore any regular point of $S/\varphi$ lifts to $|G|$ points of $S$, any of these points have orbit number $|G|$. Any neighborhood of a cone point of order $n_{i}$ is covered by a collection of discs in $S$, each disc is an $n_{i}$-fold cover of the neighborhood. Hence the number of discs that cover the neighborhood is $\frac{|G|}{n_{i}}$. Thus the center of each disc has orbit number $\frac{|G|}{n_{i}}$.

Any neighborhood of a corner reflector of order $m_{i}$ is covered by a collection of discs in $S$, each disc is an $2m_{i}$-fold cover of the neighborhood. Hence the number of discs that cover the neighborhood is $\frac{|G|}{2m_{i}}$. Thus the center of each disc has orbit number $\frac{|G|}{2m_{i}}$.
\end{proof}

\begin{lem}
 Let $n_{1},\ldots,n_{k}$ be factors of $N$. Then $\frac{N}{lcm(n_{1},\ldots,n_{k})}=gcd(\frac{N}{n_{1}},\ldots,\frac{N}{n_{k}})$. 
\end{lem}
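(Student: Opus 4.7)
The plan is to prove the identity by comparing $p$-adic valuations on both sides for each prime $p$. This reduces the statement to a single-prime computation where lcm becomes $\max$ and gcd becomes $\min$, and then the identity becomes immediate.

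Concretely, I would first fix an arbitrary prime $p$ and let $v_p$ denote the $p$-adic valuation. On the left-hand side, I would compute
\[
v_p\!\left(\tfrac{N}{\mathrm{lcm}(n_1,\ldots,n_k)}\right)=v_p(N)-\max_{1\le i\le k} v_p(n_i),
\]
using the standard fact that $v_p(\mathrm{lcm})=\max v_p$. On the right-hand side, using $v_p(\gcd)=\min v_p$ and the fact that each $n_i\mid N$ (so $v_p(n_i)\le v_p(N)$), I would compute
\[
v_p\!\left(\gcd\!\left(\tfrac{N}{n_1},\ldots,\tfrac{N}{n_k}\right)\right)=\min_{1\le i\le k}\bigl(v_p(N)-v_p(n_i)\bigr)=v_p(N)-\max_{1\le i\le k} v_p(n_i).
\]
Since the two valuations agree for every prime $p$, and both sides are positive integers (the divisibility $n_i\mid N$ ensures $N/n_i\in\mathbb{Z}$ and $\mathrm{lcm}(n_1,\ldots,n_k)\mid N$), the two numbers must be equal.

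As an alternative, I could prove it by induction on $k$. The base case $k=1$ is trivial. For $k=2$, writing $d=\gcd(n_1,n_2)$, $n_1=da$, $n_2=db$ with $\gcd(a,b)=1$, and using $\mathrm{lcm}(n_1,n_2)=dab\mid N$, I would set $N=dabm$ and verify directly that $\tfrac{N}{dab}=m=\gcd(bm,am)=\gcd(\tfrac{N}{n_1},\tfrac{N}{n_2})$. The inductive step then follows from the associativity of both $\mathrm{lcm}$ and $\gcd$ together with the $k=2$ case applied to $\mathrm{lcm}(n_1,\ldots,n_k)$ and $n_{k+1}$.

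There is no real obstacle here; the only thing worth checking is the divisibility bookkeeping, namely that $\mathrm{lcm}(n_1,\ldots,n_k)$ actually divides $N$ (so the left-hand side is a positive integer), which follows immediately from each $n_i\mid N$. I would present the valuation argument since it is the shortest and makes the duality between the two sides most transparent.
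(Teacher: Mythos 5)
Your valuation argument is correct and complete: for each prime $p$ you have $v_p(N/\mathrm{lcm}(n_1,\ldots,n_k)) = v_p(N) - \max_i v_p(n_i)$ and $v_p(\gcd(N/n_1,\ldots,N/n_k)) = \min_i\bigl(v_p(N)-v_p(n_i)\bigr)$, and these coincide; together with the (correctly noted) divisibility $\mathrm{lcm}(n_1,\ldots,n_k)\mid N$ this settles the identity. This is a genuinely different route from the paper, which argues by induction on $k$: the base case is extracted from the identity $\gcd(x,y)\,\mathrm{lcm}(x,y)=xy$ applied to $x=N/n_1$, $y=N/n_2$ together with $\mathrm{lcm}(N/n_1,N/n_2)\cdot n_1 n_2 = N\,\mathrm{lcm}(n_1,n_2)$, and the inductive step uses associativity of $\gcd$ to reduce to the two-variable case with $\mathrm{lcm}(n_1,\ldots,n_{k-1})$ in place of $n_1$. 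Your secondary sketch (induction via the $k=2$ case and associativity) is essentially the paper's argument, just with the base case verified by explicit factorization rather than the product identity. The valuation proof is shorter and makes the $\max$/$\min$ duality between $\mathrm{lcm}$ and $\gcd$ transparent in one line; the paper's proof stays entirely within elementary $\gcd$/$\mathrm{lcm}$ algebra and avoids invoking unique factorization. Either is acceptable here, and your stated preference for the valuation argument is reasonable.
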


\begin{proof}
 We work by induction. For the initial case we use the result that $gcd(x,y)lcm(x,y)=xy$ for any integers $x,y$. This implies: \begin{align*}
gcd(\frac{N}{n_{1}},\frac{N}{n_{2}})lcm(n_{1},n_{2})&=\frac{N^{2}lcm(n_{1},n_{2})}{n_{1}n_{2}lcm(\frac{N}{n_{1}},\frac{N}{n_{2}})}\\ &=\frac{N^{2}lcm(n_{1},n_{2})}{lcm(n_{2}N,n_{1}N)}\\&=\frac{N^{2}lcm(n_{1},n_{2})}{Nlcm(n_{2},n_{1})}\\ &=N
\end{align*}
For the inductive step, we work in a similar fashion: \begin{align*}
    gcd(\frac{N}{n_{1}},\ldots,\frac{N}{n_{k}})	&=gcd(gcd(\frac{N}{n_{1}},\ldots,\frac{N}{n_{k-1}}),\frac{N}{n_{k}})\\
	&=gcd(\frac{N}{lcm(n_{1}\ldots,n_{k-1})},\frac{N}{n_{k}})\\
	&=\frac{N^{2}}{n_{k}lcm(n_{1},\ldots,n_{k-1})lcm(\frac{N}{lcm(n_{1},\ldots,n_{k-1})},\frac{N}{n_{k}})}\\
	&=\frac{N^{2}}{lcm(Nn_{k},Nlcm(n_{1},\ldots n_{k-1})}\\
	&=\frac{N}{lcm(n_{1},\ldots,n_{k})}
\end{align*}
\end{proof}

\begin{prop}
 Let $\varphi:G\rightarrow Diff_{+}^{fp}(M)$ be a finite group action and $\varphi_{B_{U}}:G_{B_{U}}\rightarrow Diff(B_{U})$ the induced action on the underlying space of the base space $B$ which has branching data $(n_{1},\ldots,n_{k};m_{1},\ldots,m_{l})$. Then $\varphi:G\rightarrow Diff_{+}^{fp}(M)$ satisfies the obstruction condition if and only if $\frac{|G_{B_{U}}|}{lcm(n_{1},\ldots,n_{k},2m_{1},\ldots,2m_{l})}$ divides $b$.
 \end{prop}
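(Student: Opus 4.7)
The plan is to translate the obstruction condition into a divisibility statement about integer linear combinations of the possible orbit numbers of fibers, and then apply Lemmas 7.2 and 7.3 in sequence to rewrite this in the claimed $lcm$-form.

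The first step is to match orbit numbers of fibers in $M$ with orbit numbers of points in $B_{U}$. Because $\varphi$ is fiber-preserving, each fiber $\alpha$ projects to a well-defined point $p\in B_{U}$, and the set $\{\varphi(g)(\alpha)\mid g\in G\}$ of images of $\alpha$ projects bijectively onto the $\varphi_{B_{U}}$-orbit of $p$ (elements of the kernel $G\to G_{B_{U}}$ fix each fiber setwise). Hence $\#Orb_{\varphi}(\alpha)=\#Orb_{\varphi_{B_{U}}}(p)$. Applying Lemma 7.2 to $\varphi_{B_{U}}$, every such orbit number lies in
\[
S=\left\{\frac{|G_{B_{U}}|}{n_{1}},\ldots,\frac{|G_{B_{U}}|}{n_{k}},\frac{|G_{B_{U}}|}{2m_{1}},\ldots,\frac{|G_{B_{U}}|}{2m_{l}},|G_{B_{U}}|\right\},
\]
and conversely every element of $S$ is realized as $\#Orb_{\varphi}(\alpha)$ for some fiber $\alpha$ (just pick any point of $B_{U}$ of the appropriate orbit type and take any fiber sitting over it).

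The obstruction condition then asks precisely whether $b$ can be written as an integer linear combination of elements of $S$. The set of such combinations is the principal ideal $\gcd(S)\mathbb{Z}\subset\mathbb{Z}$, so the condition is equivalent to $\gcd(S)\mid b$. Since each $n_{i}$ and each $2m_{j}$ divides $|G_{B_{U}}|$ (being orders of stabilizers of the $\varphi_{B_{U}}$-action), we may apply Lemma 7.3 with $N=|G_{B_{U}}|$ to the list $n_{1},\ldots,n_{k},2m_{1},\ldots,2m_{l},1$, where the trailing $1$ contributes the element $|G_{B_{U}}|\in S$ but does not affect the $lcm$. This yields
\[
\gcd(S)=\frac{|G_{B_{U}}|}{lcm(n_{1},\ldots,n_{k},2m_{1},\ldots,2m_{l})},
\]
which is precisely the claimed divisor of $b$.

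The main obstacle is the initial identification $\#Orb_{\varphi}(\alpha)=\#Orb_{\varphi_{B_{U}}}(p)$ combined with the surjectivity observation that every orbit type on the base is realized by some fiber; once these are secured, Lemmas 7.2 and 7.3 reduce the proposition to an elementary number-theoretic manipulation, and the \emph{if} direction merely inverts the Bézout step used in the \emph{only if} direction.
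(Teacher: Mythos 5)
Your proof is correct and follows essentially the same route as the paper's: identify fiber orbit numbers with orbit numbers of points in $B_{U}$, invoke the orbit-number lemma to list the possible values, and convert the resulting B\'ezout/gcd condition into the stated $lcm$ form via the gcd--lcm lemma. The only difference is presentational --- you package both implications as a single statement about the ideal of integer combinations of orbit numbers (and explicitly absorb the regular orbit number $|G_{B_{U}}|$ by appending a $1$ to the $lcm$ list), whereas the paper argues the ``if'' and ``only if'' directions separately.
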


\begin{proof}
 We first note that there exist fibers $\{\alpha_{1},\ldots,\alpha_{s}\}$ and integers $\{b_{1},\ldots,b_{s}\}$ such that: $$b=\sum_{i=1}^{s}(b_{i}\cdot\#Orb_{\varphi}(\alpha_{i}))$$ if and only if there exist points $\{x_{1},\ldots,x_{s}\}\subset B_{U}$ and integers $\{b_{1},\ldots,b_{s}\}$ such that: $$b=\sum_{i=1}^{s}(b_{i}\cdot\#Orb_{\varphi_{B_{U}}}(x_{i}))$$

We begin with the if statement. So by Lemma 7.2, $\frac{|G_{B_{U}}|}{lcm(n_{1},\ldots,n_{k},2m_{1},\ldots,2m_{l})}=gcd(\frac{|G_{B_{U}}|}{n_{1}},\ldots,\frac{|G_{B_{U}}|}{n_{k}},\frac{|G_{B_{U}}|}{2m_{1}},\ldots,\frac{|G_{B_{U}}|}{2m_{l}})$ divides $b$.

Hence there exist $\{b_{1},\ldots,b_{k+l}\}$ such that: $$b=\sum_{i=1}^{k}b_{i}\cdot\frac{|G_{B_{U}}|}{n_{i}}+\sum_{i=1}^{l}b_{i}\cdot\frac{|G_{B_{U}}|}{2m_{i}}$$ by Euclid's algorithm.

So by Lemma 7.1, there are $\{x_{1},\ldots,x_{k},x_{k+1},\ldots,x_{k+l}\}\subset B_{U}$ such that $\#Orb_{\varphi_{B_{U}}}(x_{i})=\frac{|G_{B_{U}}|}{n_{i}}$ and $\#Orb_{\varphi_{B_{U}}}(x_{i})=\frac{|G_{B_{U}}|}{2m_{i}}$. Thus: $$b=\sum_{i=1}^{k+l}b_{i}\cdot\#Orb_{\varphi_{B_{U}}}(x_{i})$$

For the only if, suppose that there exist points $\{x_{1},\ldots,x_{s}\}\subset B_{U}$ and integers $\{b_{1},\ldots,b_{s}\}$ such that: $$b=\sum_{i=1}^{s}(b_{i}\cdot\#Orb_{\varphi_{B_{U}}}(x_{i}))$$ 

Without loss of generality, we can assume that the orbit numbers of all the $x_{i}$ are different, that $s=k+l$ (set $b_{i}=0$ if necessary), and that the branching data of each $x_{i}$ is $n_{i}$ for $i=1,\ldots,k$ and $2m_{i}$ for $i=k+1,\ldots,l$. 

Hence, by Lemma 7.1: $$b=\sum_{i=1}^{k}b_{i}\cdot\frac{|G_{B_{U}}|}{n_{i}}+\sum_{i=1}^{l}b_{i}\cdot\frac{|G_{B_{U}}|}{2m_{i}}$$ and so:$$gcd(\frac{|G_{B_{U}}|}{n_{1}},\ldots,\frac{|G_{B_{U}}|}{n_{k}},\frac{|G_{B_{U}}|}{2m_{1}},\ldots,\frac{|G_{B_{U}}|}{2m_{l}})$$ divides $b$. 

Finally, by Lemma 7.1, $\frac{|G_{B_{U}}|}{lcm(n_{1},\ldots,n_{k},2m_{1},\ldots,2m_{l})}$ divides $b$.
\end{proof}

This result then allows us to quickly establish whether the obstruction condition is satisfied based on the order of the induced action on the base space and the least common multiple of the data from the orbifold quotient of the induced action. This is a convenient way to establish results based on possible quotient types.

\section{Examples: Part Two}

We begin this second set of examples with an action that does not satisfy the obstruction condition.
\begin{exmp}
Construct by a Seifert $3$-manifold $M$ fibering over an even genus $g$ surface with no critical fibers and odd obstruction $b$ by taking two trivially fibered manifolds $M_{1}=S^{1}\times F_{1}$ and $M_{2}=S^{1}\times F_{2}$ where $F_{1},F_{2}$ are genus $\frac{g}{2}$ surfaces with a disc removed, and then gluing according to the map $d(u_{1},v_{1})=(u_{2}^{-1}v_{2}^{b},v_{2})$ between boundary tori.

Define the rotation $rot_{2}:F_{i}\rightarrow F_{i}$ to be an order $2$ rotation that leaves the boundary invariant.

Then consequently define an orientation-preserving, finite and fiber-preserving action on $M_{1}$ and $M_{2}$ by $f_{i}:S^{1}\times F_{i}\rightarrow S^{1} \times F_{i}$ with: $$f_{1}(u_{1},x_{1})=(u_{1},rot_{2}(x_{1})), f_{1}(u_{2},x_{2})=(-u_{2},rot_{2}(x_{2}))$$ $$f_{2}(u_{1},x_{1})=(u_{2},x_{2}), f_{2}(u_{2},x_{2})=(u_{1},x_{1})$$

It can be checked that these agree over the gluing torus.

So then the projected action on the genus $g$ surface is a $\mathbb{Z}_{2}\times\mathbb{Z}_{2}$-action and all orbit numbers are even. Hence, it cannot be that: $$b=\sum_{i=1}^{s}(b_{i}\cdot\#Orb_{\varphi}(\alpha_{i}))$$
\end{exmp}

We now adjust this example to some specific manifolds that have even obstruction class.

\begin{exmp}
We take the lens space given by $M=(0,o_{1}|(3,2),(3,2),(1,2))$. We note that certainly the two critical fibers can be exchanged and in fact the action defined as in Example 8.1 will do this. However, in this case the obstruction class is even and so the obstruction condition will be satisfied. In particular, we can see the rearrangement of the Seifert pairings that would allow this as:$$M=(0,o_{1}|(3,2+3),(3,2+3),(1,2-2))=(0,o_{1}|(3,5),(3,5))$$
\end{exmp}

In a future paper, all Elliptic manifolds will be considered and the results obtained here will serve to derive all possible finite fiber-preserving group actions subject to the obstruction condition.

\section{Group Structures}

We now establish the possible structures of the groups that can act fiber- and orientation-preservingly on a Seifert manifold (satisfying the obstruction condition). 

We firstly prove the following:

\begin{prop}
 Suppose that $\varphi:G\rightarrow Diff(S^{1})\times Diff(F)$ is a finite group action with $\varphi(g)(u,x)=(\varphi_{S^{1}}(g)(u),\varphi_{F}(g)(x))$ such that $\varphi_{S^{1}}(g)$ is orientation-preserving if and only if $\varphi_{F}(g)$ is orientation-preserving. Suppose that there exists $g_{-}\in G$ such that $\varphi_{S^{1}}(g_{-})$ is orientation-reversing and $g_{-}^{2}=1$. Then $G$ is isomorphic to a subgroup of a semidirect product of $\mathbb{Z}_{n}\times\varphi_{F}(G)_{+}$ and $\mathbb{Z}_{2}$.
\end{prop}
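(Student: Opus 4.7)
The plan is to split $G$ via the orientation homomorphism, embed its orientation-preserving half into a direct product, and then reassemble using $g_-$. Let $G_+\leq G$ denote the kernel of the homomorphism $G\to\mathbb{Z}_2$ that records whether $\varphi_{S^1}(g)$ (equivalently, by hypothesis, $\varphi_F(g)$) is orientation-preserving. Since $\varphi_{S^1}(g_-)$ is orientation-reversing, $G_+$ is a normal subgroup of index two, and the assumption $g_-^2=1$ splits the resulting short exact sequence, giving $G=G_+\rtimes\langle g_-\rangle$.

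Next I would embed $G_+$ into a direct product. For any $g\in G_+$, $\varphi_{S^1}(g)$ is a finite-order orientation-preserving self-diffeomorphism of $S^1$ and is therefore a rotation; consequently $\varphi_{S^1}(G_+)$ is a finite subgroup of the rotation group, cyclic of some order $n$, which I identify with $\mathbb{Z}_n$. The hypothesis that orientation behavior agrees in both factors yields $\varphi_F(G_+)=\varphi_F(G)_+$. The map $g\mapsto(\varphi_{S^1}(g),\varphi_F(g))$ therefore defines an injective homomorphism $G_+\hookrightarrow\mathbb{Z}_n\times\varphi_F(G)_+$, injectivity following from the injectivity of $\varphi$.

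I would then equip $\mathbb{Z}_n\times\varphi_F(G)_+$ with a $\mathbb{Z}_2=\langle g_-\rangle$ action that mirrors conjugation by $g_-$ in $G$. On the $\mathbb{Z}_n$ factor, a direct calculation shows that any orientation-reversing involution of $S^1$ conjugates a rotation $u\mapsto\omega u$ to $u\mapsto\omega^{-1}u$, independent of the particular involution, so $\mathbb{Z}_2$ acts by inversion. On $\varphi_F(G)_+$, $\mathbb{Z}_2$ acts by conjugation by $\varphi_F(g_-)$; this is an automorphism of $\varphi_F(G)_+$ because normality of $G_+$ in $G$ forces $\varphi_F(g_-)\varphi_F(G)_+\varphi_F(g_-)^{-1}=\varphi_F(G)_+$. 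Set $H=(\mathbb{Z}_n\times\varphi_F(G)_+)\rtimes\mathbb{Z}_2$ with this action, and extend the previous embedding to $\Phi:G\to H$ by $\Phi(g_+)=((\varphi_{S^1}(g_+),\varphi_F(g_+)),0)$ and $\Phi(g_+g_-)=((\varphi_{S^1}(g_+),\varphi_F(g_+)),1)$ for $g_+\in G_+$.

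The final step is to verify $\Phi$ is an injective homomorphism. The only nontrivial case in the homomorphism check involves a product that straddles both cosets of $G_+$, and it reduces precisely to the conjugation identity baked into the $\mathbb{Z}_2$-action on $\mathbb{Z}_n\times\varphi_F(G)_+$. Injectivity is immediate from injectivity of $\varphi$ on $G_+$ together with the fact that $\Phi(G_+)$ and $\Phi(G_+g_-)$ sit in distinct $\mathbb{Z}_2$-components of $H$. I expect no substantive obstruction; the only delicate point is pinning down the correct $\mathbb{Z}_2$-action on the $\mathbb{Z}_n$ factor, where the orientation-reversing involution hypothesis on $g_-$ is essential to force inversion rather than some more complicated automorphism.
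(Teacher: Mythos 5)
Your proposal is correct and follows essentially the same route as the paper: split $G$ along the orientation homomorphism using the order-two element $g_-$, and embed the orientation-preserving part into $\mathbb{Z}_n\times\varphi_F(G)_+$ via the two factors of $\varphi$. You are in fact more careful than the paper's own proof, which ends with ``the result then follows'' after establishing the splitting, whereas you explicitly construct the ambient semidirect product by extending the conjugation action of $g_-$ (inversion on $\mathbb{Z}_n$, conjugation by $\varphi_F(g_-)$ on $\varphi_F(G)_+$) to the whole of $\mathbb{Z}_n\times\varphi_F(G)_+$ — a step the paper leaves implicit.
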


\begin{proof}
 First let $\varphi(G)^{fop}$ be the subgroup of $\varphi(G)$ where each element is orientation-preserving on both components. 

Now consider the structure of $\varphi(G)^{fop}$ and note that $\varphi(G)^{fop}$ is a finite subgroup of $\varphi_{S^{1}}(G)_{+}\times\varphi_{F}(G)_{+}$. We have that $\varphi_{S^{1}}(G)_{+}\cong\mathbb{Z}_{n}$ for some $n$ and so $\varphi(G)^{fop}$ is a finite subgroup of $\mathbb{Z}_{n}\times\varphi_{F}(G)_{+}$.

We then consider the short-exact sequence $1\rightarrow\varphi(G)^{fop}\rightarrow\varphi(G)\rightarrow\mathbb{\mathbb{Z}}_{2}\rightarrow1$.

This splits if there is an element in $\varphi(G)$ of order $2$ that is not in $\varphi(G)^{fop}$. By assumption, $\varphi(g_{-})$ is such an element. The result then follows.
\end{proof}

This result then leads to the following corollaries:

\begin{cor}
Let $M$ be an orientable Seifert 3-manifold that fibers over an orientable base space. Let $\varphi:G\rightarrow Diff_{+}^{fp}(M)$ be a finite group action on $M$ that satisfies the obstruction condition. Suppose that the action preserves the orientation of the fibers Then $G$ is isomorphic to a subgroup of $\mathbb{Z}_{n}\times H$ where $H$ is a group that acts orientation-preservingly on the base space.
\end{cor}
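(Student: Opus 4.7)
The plan is to deduce the corollary directly from Theorem \ref{thm:associativity} combined with the same direct-product argument that appears in the first half of Proposition 9.1. The obstruction condition converts, via the main theorem, the abstract action into an extended product action, at which point the fiber-orientation hypothesis collapses the dihedral twist in Proposition 9.1 to the trivial extension, leaving just a direct product.

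First, I would apply Theorem \ref{thm:associativity} to obtain a decomposition $M=\hat{M}\cup X$ and a fibering product structure $k_{\hat{M}}:S^{1}\times F\to\hat{M}$ under which $\varphi|_{\hat{M}}$ has the product form $(u,x)\mapsto(\varphi_{1}(g)(u),\varphi_{2}(g)(x))$ for homomorphisms $\varphi_{1}:G\to Diff(S^{1})$ and $\varphi_{2}:G\to Diff(F)$. Next I would read off orientations: since $\varphi$ preserves the orientation of the fibers, each $\varphi_{1}(g)$ is orientation-preserving (i.e., $\alpha(g)=1$ throughout in the notation of Section 3.5), and hence $\varphi_{1}(G)$ is a finite subgroup of $Diff_{+}(S^{1})$, necessarily cyclic; write $\varphi_{1}(G)\cong\mathbb{Z}_{n}$. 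Combining this with the hypothesis that $\varphi$ is orientation-preserving on $M$ forces each $\varphi_{2}(g)$ to be orientation-preserving on $F$.

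Then I would assemble the embedding. The map $\Phi:G\to\varphi_{1}(G)\times\varphi_{2}(G)$ defined by $\Phi(g)=(\varphi_{1}(g),\varphi_{2}(g))$ is a homomorphism, and it is injective because $\varphi$ itself is injective and $\Phi(g)$ completely determines $\varphi(g)|_{\hat{M}}$. This embeds $G$ into $\mathbb{Z}_{n}\times\varphi_{2}(G)$. Taking $H=\varphi_{2}(G)$, extended over the capping discs to act on the underlying base space $B_{U}$, yields the stated conclusion: since $\beta(g)$ permutes only boundary components whose associated Seifert data agree, $\varphi_{2}$ extends equivariantly and orientation-preservingly to all of $B_{U}$, where it coincides with the induced action $\varphi_{B_{U}}$.

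The main (modest) obstacle is this last identification: verifying that the action on $F$ produced by Theorem \ref{thm:associativity} agrees, after capping off the boundary circles with discs, with the induced action on the base space $B_{U}$, so that $H$ is genuinely a symmetry group of the base space rather than merely of the cut-open surface. Everything else is routine bookkeeping once Theorem \ref{thm:associativity} has placed $\varphi$ in extended product form.
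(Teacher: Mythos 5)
Your proposal is correct and follows essentially the same route the paper intends: the obstruction condition invokes Theorem \ref{thm:associativity} to put $\varphi$ in extended product form, and then the argument is exactly the first half of the proof of Proposition 9.1, where fiber-orientation preservation makes $\varphi(G)^{fop}=\varphi(G)$ a subgroup of $\varphi_{1}(G)_{+}\times\varphi_{2}(G)_{+}\cong\mathbb{Z}_{n}\times H$. The two points you flag (injectivity of $(\varphi_{1},\varphi_{2})$ and extending $\varphi_{2}$ over the capping discs to $B_{U}$) are handled implicitly by the construction in Section 3, so no new ideas are needed beyond what you wrote.
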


\begin{cor}
Let $M$ be an orientable Seifert 3-manifold that fibers over an orientable base space. Let $\varphi:G\rightarrow Diff_{+}^{fp}(M)$ be a finite group action on $M$ that satisfies the obstruction condition. Suppose that there exists $g_{-}\in G$ such that $\varphi(g_{-})$ reverses the orientation of the fibers $g_{-}^{2}=1$. Then $G$ is isomorphic to a subgroup of a semidirect product of $\mathbb{Z}_{n}\times H$ and $\mathbb{Z}_{2}$ where $H$ is a group that acts orientation-preservingly on the base space.
\end{cor}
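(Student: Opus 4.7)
The plan is to deduce Corollary 9.3 directly from Proposition 9.1 by using Theorem \ref{thm:associativity} to convert the abstract action $\varphi$ into an extended product action and then verifying that the resulting product action on $S^1 \times F$ meets the hypotheses of Proposition 9.1.

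First I would invoke Theorem \ref{thm:associativity}: since $\varphi$ satisfies the obstruction condition, $\varphi$ is an extended product action. By the construction in Section 3, this means that after decomposing $M = \hat{M} \cup X$ appropriately, there is a fibering product structure $k_{\hat{M}} : S^1 \times F \to \hat{M}$ and group homomorphisms $\varphi_1 : G \to \mathrm{Diff}(S^1)$ and $\varphi_2 : G \to \mathrm{Diff}(F)$ such that the restricted action on $\hat{M}$ is conjugate through $k_{\hat{M}}$ to the product action $(u,x) \mapsto (\varphi_1(g)(u), \varphi_2(g)(x))$. Since the finite group $G$ injects into $\mathrm{Diff}(M)$ and its action on $\hat{M}$ is fully determined by this product action (the extension to the solid tori $X$ being canonical by coning), $G$ is isomorphic to the image of the product action $\varphi' : G \to \mathrm{Diff}(S^1) \times \mathrm{Diff}(F)$ given by $\varphi'(g) = (\varphi_1(g), \varphi_2(g))$.

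Next I would check that the orientation-compatibility hypothesis of Proposition 9.1 holds. Because $\varphi$ is orientation-preserving on the 3-manifold $M$, and $M$ locally looks like $S^1 \times F$ near a regular fiber with a fixed orientation inherited from the product structure, the diffeomorphism $(\varphi_1(g), \varphi_2(g))$ must preserve the product orientation on $S^1 \times F$. Since the orientation of a product is the product of orientations, this forces $\varphi_1(g)$ to be orientation-preserving on $S^1$ if and only if $\varphi_2(g)$ is orientation-preserving on $F$ (i.e., the quantity $\alpha(g)$ from Section 3, which appears in both coordinates, is the same). This gives the matching orientation hypothesis of Proposition 9.1.

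For the splitting element, I would take the given $g_-$ with $g_-^2 = 1$ whose action reverses the orientation of the fibers. Under the product structure this means $\varphi_1(g_-)$ is orientation-reversing on $S^1$ with $\varphi_1(g_-)^2 = \mathrm{id}$, which is precisely the hypothesis on $g_-$ in Proposition 9.1. Applying Proposition 9.1 then yields that $\varphi'(G)$, and hence $G$ itself, is isomorphic to a subgroup of a semidirect product of $\mathbb{Z}_n \times \varphi_F(G)_+$ with $\mathbb{Z}_2$. Setting $H = \varphi_F(G)_+$ and noting that $H$ acts orientation-preservingly on $F$ (and hence on the underlying space of the base orbifold), the conclusion follows. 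The only minor subtlety, and the step I would pay the most attention to, is justifying that the orientation of $M$ really does decompose as the product of orientations on $S^1$ and $F$ in a way that forces $\alpha(g)$ to be the same on both factors; this is implicit in the fibering product structure but deserves an explicit line.
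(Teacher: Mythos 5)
Your proposal is correct and follows essentially the route the paper intends: the corollary is stated as an immediate consequence of Proposition 9.1, obtained by first applying Theorem \ref{thm:associativity} to realize $\varphi$ as an extended product action and then feeding the resulting product action on $S^{1}\times F$ into the proposition. The orientation-matching point you flag ($\alpha_{1}(g)=\alpha_{2}(g)$ because $\varphi$ is orientation-preserving on $M$) is exactly the observation already recorded in the proof of Theorem 5.3, so your argument lines up with the paper's.
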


These results give us the opportunity to reduce our question that we started the paper with to a question of which finite groups act on a surface. At least in the case of low genus surface, this is a known quantity. 

\section{Summary}
We have shown that provided that the obstruction condition is satisfied, then a finite, fiber- and orientation-preserving action can be constructed via our method. The final section above gives some form to the kinds of finite groups that act this way. We note that there is the restriction that $G$ contains an order $2$ element that reverses the orientation of the fibers and therefore reverses the orientation on the base space. In the particular case of the base space being $S^{2}$ this is not a restriction as any finite group that acts is a subgroup of a finite group that has this property. For clarification of this see again \cite{kalliongis2018}.

In particular, we will establish in a future paper that the finite groups that act fiber- and orientation-preservingly on Seifert manifolds fibering over $S^{2}$ (and satisfiying the obstruction condition) are of the form $(\mathbb{Z}_{n}\times H)\circ_{-1}\mathbb{Z}_{2}$ where $\mathbb{Z}_{2}$ acts by anticommuting with each element of $\mathbb{Z}_{n}\times H$ and $H$ is one of either the trivial group,$\mathbb{Z}_{n}$, $Dih(\mathbb{Z}_{n})$, $A_{4}$, $S_{4}$, or $A_{5}$.

\bibliographystyle{unsrt}
\bibliography{references.bib}

\end{document}